\newtheorem{thm}{Theorem}[section]
\newtheorem*{claim*}{Claim}
\newtheorem{cor}[thm]{Corollary}
\newtheorem*{cor*}{Corollary}
\newtheorem{lemma}[thm]{Lemma}
\newtheorem*{lemma*}{Lemma}
\newtheorem{dfn}[thm]{Definition}
\newtheorem*{dfn*}{Definition}
\newtheorem{prop}[thm]{Proposition}
\newtheorem*{prop*}{Proposition}
\theoremstyle{definition}
\newtheorem{rmk}[thm]{Remark}
\newtheorem*{rmk*}{Remark}
\newtheorem{example}[thm]{Example}
\newtheorem*{example*}{Example}
\newtheorem{notation}[thm]{Notation}
\newtheorem*{notation*}{Notation}
\newcommand{\ie}{i.e.}
\newcommand{\rngmlt}{\ast} % ring multiplication
\newcommand{\strmlt}{\star} % ring multiplication defined for a brace i.e. x \strmlt y = x \brmlt y - x - y
\newcommand{\brmlt}{\circ}   % brace multiplication
\newcommand{\fctrmlt}{\odot} % factored brace multiplication i.e. x \fctrmlt y = x1 \brmlt y \brmlt x2, if x = x1 \brmlt x2
\newcommand{\sgm}[2]{\sigma_{#1}(#2)}
\newcommand{\tauu}[2]{\tau_{#1}(#2)}
\newcommand{\prm}{^{\prime}}
\newcommand{\dprm}{^{\prime \prime}}
\newcommand{\ints}{\mathds{Z}}
\newcommand{\nats}{\mathds{N}}
\newcommand{\complex}{\mathds{C}}
\newcommand{\gequiv}{$\mathcal{G} (X,r)-$equivariant}
\newcommand{\gequivB}{$\mathcal{G} (B,r)-$equivariant}
\newcommand{\setGequiv}{\widetilde{\mathcal G } (X,r)}
\newcommand{\setGequivB}{\widetilde{\mathcal G } (B,r)}
\newcommand{\id}{\text{id}}
\newcommand{\cnst}{\text{cnst}}
\newcommand{\aring}{(R,+,\rngmlt)}
\newcommand{\nring}{(N,+,\rngmlt)}
\newcommand{\abrace}{(B,+,\brmlt)}
\newcommand{\nbrace}{(N,+,\brmlt)}
\newcommand{\nfctr}{(N,+,\fctrmlt)}
\newcommand{\soc}[1]{\text{Soc}( #1 ) }
\newcommand{\tildek}{\widetilde{k}}
\newcommand{\hatk}{\widehat{k}}
\newcommand{\op}{\text{op}}
\def\mathcolor#1#{\@mathcolor{#1}}
\def\@mathcolor#1#2#3{%
  \protect\leavevmode
  \begingroup
    \color#1{#2}#3%
  \endgroup
}
\newcommand*{\inlineequation}[2][]{%
  \begingroup
    % Put \refstepcounter at the beginning, because
    % package `hyperref' sets the anchor here.
    \refstepcounter{equation}%
    \ifx\\#1\\%
    \else
      \label{#1}%
    \fi
    % prevent line breaks inside equation
    \relpenalty=10000 %
    \binoppenalty=10000 %
    \ensuremath{
      %\displaystyle % larger fractions, ...
      #2%
    }
    ~\@eqnnum
  \endgroup
}
\DeclareRobustCommand{\em}{%
  \@nomath\em \if b\expandafter\@car\f@series\@nil
  \normalfont \else \bfseries \fi}
\newcommand\restr[2]{{% we make the whole thing an ordinary symbol
  \left.\kern-\nulldelimiterspace % automatically resize the bar with \right
  #1 % the function
  \vphantom{\big|} % pretend it's a little taller at normal size
  \right|_{#2} % this is the delimiter
  }}
\title{New Solutions to the Reflection Equation with Braces}
\author{Kyriakos Katsamaktsis\\ University of Oxford\footnote{The work was carried out while the author was affiliated with the University of Edinburgh.}}
\date{\today}
\begin{document}

\maketitle

%%%%%%%%%%%%%%%%%%%%%%%%%%%%%%%%%%%%%%%%%%%%%%%%%%%%%%%%%%%%%%%%%%%%%%%%%%%%%%%%%%%%%%%%%%%%%%%%%%%%%%%%%%%%%%%%%%%%%%%

% --------------------------------------      ABSTRACT      ---------------------------------- %

%%%%%%%%%%%%%%%%%%%%%%%%%%%%%%%%%%%%%%%%%%%%%%%%%%%%%%%%%%%%%%%%%%%%%%%%%%%%%%%%%%%%%%%%%%%%%%%%%%%%%%%%%%%%%%%%%%%%%%%

\begin{abstract}
We find several new solutions to the set-theoretic analogue of the celebrated reflection equation of Cherednik. We use braces, an algebraic structure associated to the related Yang-Baxter equation, by building on the recent work of Smoktunowicz, Vendramin and Weston. We show that every brace with a nontrivial centre yields several simple solutions and that a natural class of solutions is a near-ring. We find more solutions for factorizable rings. Some of our solutions apply to the original parameter-dependent equation with rational parameter dependence.
\end{abstract}

%%%%%%%%%%%%%%%%%%%%%%%%%%%%%%%%%%%%%%%%%%%%%%%%%%%%%%%%%%%%%%%%%%%%%%%%%%%%%%%%%%%%%%%%%%%%%%%%%%%%%%%%%%%%%%%%%%%%%%%

% --------------------------------      MAIN TEXT HERE        -------------------------------- %

%%%%%%%%%%%%%%%%%%%%%%%%%%%%%%%%%%%%%%%%%%%%%%%%%%%%%%%%%%%%%%%%%%%%%%%%%%%%%%%%%%%%%%%%%%%%%%%%%%%%%%%%%%%%%%%%%%%%%%%

\section{Introduction}
%
%\cite{kulish1993covariance,kulish1993constant,mezincescu1992fusion, Alekseev1991, schirrmacher1991two, kobayashi1992differential,cremmer1990quantum,alekseev1992hidden,babelon1991universal,freidel1991quadratic, freidel1991classical,kulish1992algebraic,molev1996yangians, jimbo}
%
The well-known Yang-Baxter equation~\cite{ybe-fst1,ybe-fst2} has intrigued mathematicians and physicists for over five decades. Since it first appeared in problems in statistical mechanics~\cite{ybe-fst2} and quantum scattering~\cite{ybe-fst1},  connections to many fields of pure and applied mathematics have been drawn.
In mathematical physics it occupies a central role in the theory of integrable systems~\cite{jimbo}, while
on the mathematical side it played an important part in the development of Hopf algebras~\cite{hopf, hopf2} and quantum groups~\cite{icm, kassel}.
%is important in the theory of knots
% citation
%and quantum groups
% citation
%and played an important part in the development of Hopf algebras~\cite{hopf, hopf2}.
%] P. Etingof, S. Gelaki, A method of construction of finite-dimensional triangular semisimple Hopf algebras, Math. Res. Lett. 5 (1998) 551–561.
%
%
%from knot theory \cite{knots}
%J.M. Franko, Braid group representations arising from the Yang–Baxter equation, J. Knot Theory Ramifications 19 (2010) 525
%
%
%
%to quantum computing \cite{qcomp}.
%] R. Iordanescu, F.F. Nichita, I.M. Nichita, The Yang–Baxter equation, (quantum) computers and unifying theories, Axioms 3 (2014) 360–368
%
%
%
Since Drinfeld's suggestion in the 1986 ICM~\cite{icm} much research has focused on the set-theoretic variant of the equation, the study of which benefited from the seminal works of Etingof, Schedler and Soloviev~\cite{etingof} and Gateva-Ivanova and Van den Bergh~\cite{ivanova}. 
% P. Etingof, T. Schedler, and A. Soloviev. Set-theoretical solutions to the quantum Yang-Baxter equation. Duke Math. J., 100(2):169–209, 1999.
%T. Gateva-Ivanova and M. Van den Bergh. Semigroups of I-type. J. Algebra, 206(1):97–112,1998.

%The reflection equation is a related equation that is important, among other fields, in the theory of integrable systems~\cite{sklyanin}. 
% TODO: THE NEXT TWO LINES NEED TO BE REWRITTEN; MAY BE GIBERRISH; JUST DON'T MAKE THE CONNECTION SO EXPLICIT
%While the Yang-Baxter equation is connected to the quantum inverse scattering method~\cite{korepin_bogoliubov_izergin_1993}, 
%the reflection equation was introduced by Cherednik~\cite{fst-refl} in the study of quantum scattering on the half line~\cite{kulish1993constant}.

Closely related to the Yang-Baxter equation is the reflection equation which was introduced by Cherednik~\cite{fst-refl} in the study of quantum scattering on the half line.
%Solutions to the reflection equation are used to construct quantum integrable open spin chains with local Hamiltonians, where the reflection controls the effects on the boundary \cite{avan2014reflection}.
The reflection equation and associated algebraic structures are relevant to many fields of mathematics and physics.
Similarly to the Yang-Baxter equation, the reflection equation is important in the theory of quantum groups~\cite{kulish1992algebraic, molev1996yangians}
and integrable systems~\cite{sklyanin}.
Kulish~\cite{kulish1993covariance} mentions connections to 
non-commutative differential geometry~\cite{Alekseev1991,schirrmacher1991two,kobayashi1992differential}
%quantum Liouville theory~\cite{cremmer1990quantum} 
and lattice Kac-Moody algebras~\cite{alekseev1992hidden, babelon1991universal,freidel1991quadratic, freidel1991classical}.
Researchers have previously obtained solutions to the reflection equation using Hecke algebras \cite{doikou-martin02,doikou2005affine} and the Temperley–Lieb algebra \cite{avan2011reflection}.

Caudrelier and Zhang~\cite{fst-comb-refl} recently formulated the set-theoretic reflection equation and provided the first examples of solutions, called reflections. 
In~\cite{syst-comb-refl} the set-theoretic reflection equation was first studied systematically and the reflections for a specific class of solutions of the Yang-Baxter equation were classified.

In this paper we describe new solutions to the set-theoretic reflection equation using the theory of braces.
Braces, a generalisation of Jacobson radical rings, were introduced by Rump~\cite{rump} in a seminal paper to study the set-theoretic Yang-Baxter equation, where he showed that every non-degenerate involutive solution to the set-theoretic Yang-Baxter equation can be embedded into a brace. 
Given the relationship between the reflection and the Yang-Baxter equations, searching for reflections coming from braces is a natural approach. 
The definition of a brace was further crystallised by Ced{\'o}, Jespers and Okniński~\cite{cedo} and noninvolutive solutions are described by skew braces~\cite{skew}. 
The last few years have seen an explosive growth in the number of publications on the Yang-Baxter equation and braces~\cite{cedo, cedo2, cedo-surv1, cedo-surv2, cedo-surv3, cedo-surv4, cedo-surv5, cedo-surv6, cedo-surv7}.
Our work builds on the recent results of Smoktunowicz, Vendramin and Weston~\cite{refl} who first used a brace-theoretic approach to the reflection equation.

The paper is organised as follows. 
In section \ref{section-preliminaries} we give the necessary background to braces and the reflection equation and describe some simple examples of reflections. 
In section \ref{section-reflection-for-braces} we find several simple reflections for an arbitrary brace and show that the class of \gequiv\ maps, a subset of the reflections first introduced in~\cite{refl}, is a near-ring. 
Next in section \ref{section-reflections-for-factorizable-rings} we find reflections for factorizable rings, a specific class of braces coming from nilpotent rings. 
Lastly in section \ref{section-parameter-dependent} we show that some of the reflections from the previous sections yield solutions to the parameter-dependent reflection equation that is used in physics.

%%%%%%%%%%%%%%%%%%%%%%%%%%%%%%%%%%%%%%%%%%%%%%%%%%%%%%%%%%

\section{Preliminaries} \label{section-preliminaries}

% -------------------------------------------------------------------------------------- %
% -------------------------------------------------------------------------------------- %
% -------------------------------------------------------------------------------------- %

% ------------------------------------        BRACES       -------------------------------- %

% -------------------------------------------------------------------------------------- %
% -------------------------------------------------------------------------------------- %
% -------------------------------------------------------------------------------------- %

\subsection{Introducing the Reflection Equation}

Recall the (linear) Yang-Baxter equation (YBE):
\begin{equation} \label{linear-ybe}
    (R \otimes I) (I \otimes R) (R \otimes I) = (I \otimes R) (R \otimes I) (I \otimes R)
\end{equation}
where $R\colon V\otimes V \to V\otimes V$ for $V$ a vector space and $I$ is the identity operator on $V$. The set-theoretic YBE is
\[
(\id \times r) (r \times \id) (\id \times r)
= (r \times \id) (\id \times r) (r \times \id) 
\]
where $r:X\times X \to X\times X$ for a set $X$, $\id$ is the identity map on $X$, and the operation of the maps on either side is function composition. We write
$r(x,y) = (\sigma_x(y), \tau_y(x))$.
A solution to the set-theoretic YBE corresponds to a solution to the linear YBE that only permutes the basis vectors. 
We call $(X,r)$ \emph{involutive} if 
$r^2= \id$ 
and \emph{nondegenerate} if 
$\sigma_x,\tau_y$ 
are bijective $X\to X$ maps for every $x,y \in X$.

Related to the Yang-Baxter equation is the reflection equation:
\begin{dfn}[Equation (10), \cite{fst-refl}] \label{refl-linear-def}
Let $V$ be a vector space and
\(
R: V\otimes V  \to V\otimes V 
\) 
a solution to the Yang-Baxter equation for $V$. Then a map
\(
K: V \to V
\) 
is a solution to the \emph{reflection equation} if it satisfies
\begin{equation} \label{refl-linear-eq}
    R (I \otimes K) R (I \otimes K) = (I \otimes K) R (I \otimes K) R.
\end{equation}
\end{dfn}

As with the Yang-Baxter equation, we can define a set-theoretic analogue of the reflection equation:
\begin{dfn}[Definition 1.1, \cite{fst-comb-refl}]
Let $(X,r)$ be a non-degenerate set theoretic solution to the YBE. A map $k : X \to X$ is a set-theoretic solution to the reflection equation for $(X,r)$ if it satisfies
\begin{equation} \label{refl-eq}
    r (\id \times k) r (\id \times k) = (\id \times k) r (\id \times k) r.
\end{equation}
We also say that $k$ is a \emph{reflection} of $(X,r)$. 
\end{dfn}

\begin{example} \label{refl-eq-ex-mod-n}
Consider $(X,r)$ with $X=\ints_n$ and $r(x,y) = (y+1,x)$. It is easy to see that $r$ is a solution of the set-theoretic YBE. Let $(a,b) \in X \times X$ and $k: X \to X$ be arbitrary. Then the RHS of \eqref{refl-eq} evaluates to
\[
(k(a)+1, k(b+1))
\]
and the LHS evaluates to
\[
    (k(a)+1, k(b) +1).
\]
Hence $k$ is a reflection if and only if
\[
    k(b+1) = k(b) + 1, b\in X
\]
equivalently
\[
    k(1) = k(0) + 1,\ k(2) = k(0) + 2,...,k(n-1) = k(0) + n-1
\]
hence there are precisely $n$ reflections, each one fixed by the value of $k(0)$.
\end{example}

\begin{rmk}
In theorem 1.8 of~\cite{refl} it is shown that if $(X,r)$ is a nondegenerate involutive solution to the set-theoretic YBE then to check whether a map $k$ is a reflection of $(X,r)$ it suffices to check that only the first coordinates of the reflection equation match. Example \ref{refl-eq-ex-mod-n} shows that the assumption that $r$ is involutive is necessary.
\end{rmk}

\begin{example} \label{refl-eq-ex-mod-2}
If we take $n=2$ in example \ref{refl-eq-ex-mod-n} the only reflections are the identity and the transposition $(12)$.
\end{example}
\noindent The following example is well-known in the physics community:
\begin{example} \label{refl-eq-ex-invertible} 
%%%%%%%% uses \ref{YBE-exam-invertible}
Consider the non-degenerate set theoretic solution $(X,r)$ with $r(x,y) = (\phi (y) , \phi ^{-1} (x))$ where $\phi : X \to X$ is a bijection. Then for arbitrary $(a,b) \in X \times X$ and $k:X \to X$ both sides of equation \eqref{refl-eq} evaluate to $(\phi k \phi ^{-1} (a),k(b))$ as the following calculation shows:
\begin{align*}
    (a,b) &{\overset{k_2}{\longmapsto}} (a,k(b)) \\
          &{\overset{r}{\longmapsto}} (\phi k(b), \phi^{-1}(a))\\
          &{\overset{k_2}{\longmapsto}} (\phi k(b), k \phi^{-1}(a) )\\
          &{\overset{r}{\longmapsto}} (\phi k \phi ^{-1} (a),k(b)).
\end{align*}
And for the other side of \eqref{refl-eq} we have
\begin{align*}
    (a,b)  &{\overset{r}{\longmapsto}} (\phi(b),\phi^{-1}(a)) \\
          &{\overset{k_2}{\longmapsto}} (\phi (b), k \phi^{-1}(a))\\
          &{\overset{r}{\longmapsto}} (\phi k \phi^{-1}(a), b )\\
          &{\overset{k_2}{\longmapsto}} (\phi k \phi ^{-1} (a),k(b)).
\end{align*}
Hence any map $k: X \to X$ is a reflection.
\end{example}

% -------------------------------------------------------------------------------------- %
% -------------------------------------------------------------------------------------- %
% -------------------------------------------------------------------------------------- %
% -------------------------------------------------------------------------------------- %

% ------------------------------------        BRACES       -------------------------------- %

% -------------------------------------------------------------------------------------- %
% -------------------------------------------------------------------------------------- %
% -------------------------------------------------------------------------------------- %
% -------------------------------------------------------------------------------------- %

\subsection{Braces} \label{subsection-braces}
Braces were introduced by Rump \cite{rump} to study the YBE. The definition we present here was distilled in \cite{cedo}.

\begin{dfn}[Definition 2.2 \cite{cedo}] \label{dfn-brc}
Let $B$ be a nonempty set with two binary operations, $+$, called addition, and $\brmlt$, called multiplication. We call 
$(B, +, \brmlt)$ a \emph{left brace} or simply \emph{brace} if
\begin{enumerate}
    \item $(B,+)$ is an abelian group, called the additive group of the brace, with identity $0\in B$
    \item $(B,\brmlt)$ is a group, called the multiplicative group
    \item for $x,y,z \in B,$
    \begin{equation} \label{dstr}
        x \brmlt (y + z) = x \brmlt y + x\brmlt z -x.
    \end{equation}
\end{enumerate}
For $x\in B$ we denote the inverse of $x$ in $(B,+)$ by $-x$ and the inverse in $(B,\brmlt)$ by $x^{-1}$.
\end{dfn}
\noindent A \emph{subbrace} is a subset of $\abrace$ that satisfies the axioms of a brace with the same operations.

Rump showed in~\cite{rump} that every nondegenerate, involutive set-theoretic solution to the YBE can be embedded in a brace. The next theorem is implicit in \cite{rump} and explicit in \cite{cedo}.
\begin{thm}\label{yb-brc-map}
Let $(B,+,\brmlt)$ be a brace and for $x,y \in B$ define $r(x,y) = (\sigma_x(y), \tau_y(x))$, the \emph{Yang-Baxter map} of $\abrace$, where
\begin{equation}
    \sigma_x(y) = x\brmlt y - x,\ \tau_y(x) = (\sigma_x(y))^{-1} \brmlt x - (\sigma_x(y))^{-1}.
\end{equation}
Then $(B,r)$ is a nondegenerate involutive solution of the set-theoretic YBE. 

Conversely, for every nondegenerate involutive solution of the set-theoretic YBE $(X,r)$, there is a brace $\abrace$ such that $X\subseteq B$ and $r$ is the restriction of the Yang-Baxter map of $\abrace$ to $X\times X$.
\end{thm}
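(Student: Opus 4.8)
The plan is to prove the two directions separately, organising everything around the additive action of the multiplicative group. For the forward direction I would set $\lambda_x := \sigma_x$, so $\lambda_x(y) = x \brmlt y - x$. Axiom~\eqref{dstr} says exactly that each $\lambda_x$ is additive; applying \eqref{dstr} to $x\brmlt(0+0)$ gives $x\brmlt 0 = x$, so $0$ is the multiplicative identity and $\lambda_0 = \id$; and a one-line computation using \eqref{dstr} together with associativity of $\brmlt$ shows $\lambda_{x\brmlt y} = \lambda_x\lambda_y$. Hence $\lambda\colon(B,\brmlt)\to\mathrm{Aut}(B,+)$ is a group homomorphism and each $\lambda_x$ is invertible with $\lambda_x^{-1}=\lambda_{x^{-1}}$. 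The second ingredient is the product-preservation identity $\sgm{x}{y}\brmlt\tauu{y}{x} = x\brmlt y$: substituting $\tauu{y}{x} = (\sgm{x}{y})^{-1}\brmlt x - (\sgm{x}{y})^{-1}$ and simplifying the product with \eqref{dstr} and associativity collapses it to $x+\sgm{x}{y}=x\brmlt y$. Together with the homomorphism property this rewrites the map as $r(x,y) = \bigl(\lambda_x(y),\,\lambda_{\lambda_x(y)}^{-1}(x)\bigr)$, i.e.\ $\tauu{y}{x} = \lambda_{\sgm{x}{y}}^{-1}(x)$.

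With these two facts the three required properties are short to check. For involutivity, put $x' = \lambda_x(y)$ and $y' = \lambda_{x'}^{-1}(x)$; then the first coordinate of $r^2(x,y)$ is $\lambda_{x'}(y') = x$ and the second is $\lambda_{\lambda_{x'}(y')}^{-1}(x') = \lambda_x^{-1}(\lambda_x(y)) = y$, so $r^2 = \id$. For nondegeneracy, each $\sigma_x=\lambda_x$ is bijective by the first paragraph, and bijectivity of each $\tau_y$ is routine---either from the inverse supplied by $r^2 = \id$, or from the standard fact that an involutive solution with all $\sigma_x$ bijective is nondegenerate. For the braid relation it suffices to verify the coordinate conditions attached to a map of the form $(\sigma_x(y),\tau_y(x))$; the principal one, $\sigma_x\sigma_y = \sigma_{\sgm{x}{y}}\sigma_{\tauu{y}{x}}$, is immediate because $\lambda_x\lambda_y = \lambda_{x\brmlt y}$ and $\lambda_{\sgm{x}{y}}\lambda_{\tauu{y}{x}} = \lambda_{\sgm{x}{y}\brmlt\tauu{y}{x}} = \lambda_{x\brmlt y}$ by product preservation, and the remaining conditions reduce to the same two identities after substituting $\tauu{y}{x}=\lambda_{\sgm{x}{y}}^{-1}(x)$.

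For the converse I would use the structure group $G(X,r) = \langle\, X \mid x\,y = \sgm{x}{y}\,\tauu{y}{x}\,\rangle$ of Etingof, Schedler and Soloviev~\cite{etingof}. Their analysis provides a bijective $1$-cocycle from $G(X,r)$ onto the free abelian group $\ints^{(X)}$ on $X$; transporting the addition of $\ints^{(X)}$ back along this bijection endows the underlying set of $G(X,r)$ with an abelian group operation $+$ while keeping its original group operation $\brmlt$. I would then check that $+$ and $\brmlt$ satisfy \eqref{dstr}, which is precisely the cocycle condition rewritten, so that $B := (G(X,r),+,\brmlt)$ is a brace. The generators give an injection $X\hookrightarrow B$, and a final computation with the defining relations of $G(X,r)$ shows that the Yang--Baxter map of $B$ restricts on $X\times X$ to the original $\sigma_x,\tau_y$, hence to $r$.

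The main obstacle is the converse, and within it the construction of the additive structure: the forward direction is essentially bookkeeping with the two identities above, whereas producing an abelian group on $G(X,r)$ compatible with $\brmlt$ rests on the nontrivial bijective-cocycle (equivalently, $I$-structure) theorem for nondegenerate involutive solutions. I expect the two delicate points to be verifying that the transported addition really satisfies \eqref{dstr}, and confirming that restricting the brace's Yang--Baxter map returns $r$ itself rather than merely some solution sharing its structure group; both depend on faithfully tracking the defining relations of $G(X,r)$ through the cocycle.
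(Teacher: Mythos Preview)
The paper does not give its own proof of this theorem: it states the result and attributes it to the literature, writing that it is ``implicit in \cite{rump} and explicit in \cite{cedo}''. So there is no in-paper argument to compare against. Your sketch is correct and is essentially the standard proof found in those references: the forward direction via the action $\lambda\colon(B,\brmlt)\to\mathrm{Aut}(B,+)$ together with the product-preservation identity $\sgm{x}{y}\brmlt\tauu{y}{x}=x\brmlt y$ (which the paper records separately as Proposition~\ref{sigma-homo}), and the converse via the structure group $G(X,r)$ and the bijective $1$-cocycle of Etingof--Schedler--Soloviev. Your identification of the delicate points in the converse---checking \eqref{dstr} for the transported addition and verifying that the brace's Yang--Baxter map restricts to $r$ on $X\times X$---is accurate; these are exactly the places where one must track the defining relations carefully, and they are handled in \cite{cedo}.
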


\begin{notation}
In several parts of the paper we will encounter results that start as ``Let $\abrace$ be a brace with Yang-Baxter map $r$ and a subset $X\subseteq B$ such that $(X,r)$ is a nondegenerate set-theoretic solution to the YBE''. By $(X,r)$ strictly speaking what is meant is $(X,\restr{r}{X\times X})$. For the sake of brevity and to keep notation simple we write $r$ instead of $\restr{r}{X\times X}$.
\end{notation}

At several points we will use the following basic identity:
\begin{lemma}[pp. 4, \cite{cedo}] \label{cedo-lemma}
Let $(B,+,\brmlt)$ be a brace and $x,y,z \in B.$ Then
\[
z \brmlt (x-y) - z= z\brmlt x - z\brmlt y
\]
\end{lemma}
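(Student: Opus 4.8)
The plan is to reduce the claim to the single distributivity axiom~\eqref{dstr}, which is the only ``mixed'' relation a brace provides. The essential point is that a brace has no built-in rule for distributing $\brmlt$ over a difference, so the subtraction $x-y$ must first be rewritten additively as $x + (-y)$ and then fed into~\eqref{dstr}. Everything else is bookkeeping inside the abelian group $(B,+)$.

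First I would record two auxiliary identities. Setting both additive summands equal to $0$ in~\eqref{dstr} gives $z\brmlt 0 = z\brmlt 0 + z\brmlt 0 - z$, and cancelling in $(B,+)$ yields $z\brmlt 0 = z$. Next I would apply~\eqref{dstr} to the pair $y$ and $-y$, whose sum is $0$: this reads $z\brmlt 0 = z\brmlt y + z\brmlt(-y) - z$, and combining with the previous identity $z\brmlt 0 = z$ solves for $z\brmlt(-y) = 2z - z\brmlt y$. This second identity is the real content, since it is exactly the place where the asymmetry between ``sum'' and ``difference'' has to be resolved.

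With these in hand, I would write $x - y = x + (-y)$ and expand once more by~\eqref{dstr}, obtaining $z\brmlt(x-y) = z\brmlt x + z\brmlt(-y) - z$. Substituting the formula $z\brmlt(-y) = 2z - z\brmlt y$ gives $z\brmlt(x-y) = z\brmlt x + (2z - z\brmlt y) - z = z\brmlt x - z\brmlt y + z$, and subtracting $z$ from both sides produces the desired equality $z\brmlt(x-y) - z = z\brmlt x - z\brmlt y$.

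I do not expect a genuine obstacle here: the statement is a routine consequence of~\eqref{dstr} together with the abelian group structure of $(B,+)$. The only subtlety, and the step I would take care to justify explicitly, is the computation of $z\brmlt(-y)$, because the distributivity axiom is phrased for $y+z$ rather than for a difference; isolating this identity first keeps the final expansion clean and avoids any temptation to ``distribute over subtraction'' directly.
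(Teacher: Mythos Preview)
Your proof is correct. The paper does not actually supply a proof of this lemma; it is stated with a citation to \cite{cedo} and used freely thereafter. Your argument is the standard one, and in fact your auxiliary identity $z\brmlt(-y) = 2z - z\brmlt y$ is precisely item~1 of Lemma~\ref{brcmlt-ids} (in rearranged form), which the paper proves immediately afterwards by the same reasoning you give. So your approach is entirely in line with the paper's own toolkit.
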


\noindent We will also make use of the following simple identities:
\begin{lemma} \label{brcmlt-ids}
Let $\abrace$ be a brace, $x,y,y_1,...,y_n\in B$ and $n\in \nats$ \footnote{by $\nats$ we mean the integers $\geq 1$.}. 
Then the following are true:
\begin{enumerate}
    \item 
    $x\brmlt y + x\brmlt (-y) = 2x$.
    \item 
    $x\brmlt(y_1 + \cdot \cdot \cdot + y_n) = x\brmlt y_1 + x\brmlt y_2 + \cdot \cdot \cdot + x\brmlt y_n - (n-1) x$.
    \item 
    $x\brmlt(ny) = n\ x\brmlt y - (n-1) x$.
    \item
    $x\brmlt (-ny) = (n+1) x -n\ x\brmlt y$
\end{enumerate}
\end{lemma}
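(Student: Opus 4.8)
The plan is to derive all four identities from the single distributivity axiom \eqref{dstr}, treating the first two parts as the substance and obtaining the last two as immediate corollaries. The one preliminary fact I need is that $x\brmlt 0 = x$ for every $x\in B$; this is not listed among the axioms but drops out of \eqref{dstr} by setting $y=z=0$, which gives $x\brmlt 0 = x\brmlt 0 + x\brmlt 0 - x$ and hence, cancelling in the abelian group $(B,+)$, $x\brmlt 0 = x$.

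For part 1, I would apply \eqref{dstr} with $z=-y$, so that $x\brmlt(y+(-y)) = x\brmlt y + x\brmlt(-y) - x$. The left-hand side is $x\brmlt 0 = x$ by the preliminary fact, and rearranging gives $x\brmlt y + x\brmlt(-y) = 2x$.

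For part 2, I would induct on $n$. The case $n=1$ is trivial and the case $n=2$ is exactly \eqref{dstr}. For the inductive step, writing $y_1+\cdots+y_{n+1} = (y_1+\cdots+y_n)+y_{n+1}$ and applying \eqref{dstr} reduces the claim to the hypothesis for $n$; collecting the stray copies of $x$ turns the $-(n-1)x$ coming from the hypothesis together with the $-x$ coming from \eqref{dstr} into $-nx$, which is exactly the required form. Part 3 is then just the specialisation $y_1=\cdots=y_n=y$ of part 2, since in that case $y_1+\cdots+y_n = ny$. Finally, part 4 follows by combining part 1 (applied with $ny$ in place of $y$) with part 3: part 1 gives $x\brmlt(-ny) = 2x - x\brmlt(ny)$, and substituting $x\brmlt(ny) = n\,x\brmlt y - (n-1)x$ from part 3 and simplifying yields $(n+1)x - n\,x\brmlt y$.

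None of the steps presents a genuine obstacle; the only points that require a moment's care are the preliminary identity $x\brmlt 0 = x$, which must be established before part 1 can be run, and the bookkeeping of the additive constants in the induction for part 2. I do not expect to need Lemma \ref{cedo-lemma} for any of these, as distributivity alone suffices.
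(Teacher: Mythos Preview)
Your proposal is correct and matches the paper's proof essentially step for step: the paper also obtains part~1 from $x = x\brmlt 0 = x\brmlt(y+(-y))$ via \eqref{dstr}, gets part~2 by induction on \eqref{dstr}, specialises to obtain part~3, and combines parts~1 and~3 for part~4. The only cosmetic difference is that you spell out the derivation of $x\brmlt 0 = x$ explicitly, whereas the paper uses it without comment.
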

\begin{proof}
\hfill
\begin{enumerate}
    \item 
     Using distributivity~\eqref{dstr} we have:
    \[
         x = x\brmlt 0 
        = x\brmlt (y + (-y))
        = x\brmlt y + x\brmlt (-y) - x
    \]
         and rearranging gives the desired result.
    \item 
         This follows from distributivity~\eqref{dstr} by induction.
    \item This follows from identity 2 on the list by setting $y_1 = y_2 = ... = y_n = y$.
    
    \item This follows from identities 1 and 3 on the list.
\end{enumerate}
\end{proof}

Straightforward calculations prove the following well-known identities. The proof of the first two can be found in lemma 2.6 of \cite{cedo2}.
\begin{prop}\label{sigma-homo}
Let $(B,+,\brmlt)$ be a brace with Yang-Baxter map $r(x,y) = (\sigma_x(y), \tau_y(x))$ and $X\subseteq B$ so that $(X,r)$ is a nondegenerate involutive solution of the set-theoretic YBE. Then:
\begin{enumerate}
    \item $\sigma_x$ is a $(B,+)$ group homomorphism for any $x\in X$.
    \item $\inlineequation[sigma-compose]{
     \sigma_x  \sigma_y = \sigma_{x\brmlt y},\ x,y \in X 
    }$
    \item  $\inlineequation[brc-sigma-tau]{
    \sigma_x(y) \brmlt \tau_y (x) = x \brmlt y,\ x,y \in X}$
\end{enumerate}
\end{prop}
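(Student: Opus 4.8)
The plan is to verify all three identities by direct computation from the definitions $\sigma_x(y) = x\brmlt y - x$ and $\tau_y(x) = (\sigma_x(y))^{-1}\brmlt x - (\sigma_x(y))^{-1}$, using only the brace axioms; in particular I expect to need neither nondegeneracy nor involutivity, so the identities really hold for the Yang-Baxter map of the whole brace. The one preliminary fact I would record first is that the additive identity $0$ is also the multiplicative identity. This follows by putting $y=z=0$ in the distributivity axiom~\eqref{dstr}, which gives $x\brmlt 0 = 2(x\brmlt 0) - x$, hence $x\brmlt 0 = x$ for all $x$; since $(B,\brmlt)$ is a group and a right identity in a group is the identity, we get $0\brmlt x = x\brmlt 0 = x$. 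I will lean on this repeatedly.

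For part~1 I would simply expand $\sigma_x(y+z) = x\brmlt(y+z) - x$ with~\eqref{dstr} to obtain $x\brmlt y + x\brmlt z - 2x$, and compare with $\sigma_x(y) + \sigma_x(z) = (x\brmlt y - x) + (x\brmlt z - x)$; the two agree, and since $(B,+)$ is abelian this is exactly the homomorphism property (preservation of $0$ and of inverses then being automatic). For part~2 I would compute $\sigma_x(\sigma_y(z)) = \sigma_x(y\brmlt z - y)$, split it as $\sigma_x(y\brmlt z) - \sigma_x(y)$ using the additive homomorphism of part~1, and then invoke associativity of $\brmlt$ to write $\sigma_x(y\brmlt z) = (x\brmlt y)\brmlt z - x$. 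Subtracting $\sigma_x(y) = x\brmlt y - x$ telescopes to $(x\brmlt y)\brmlt z - x\brmlt y = \sigma_{x\brmlt y}(z)$, as required.

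Part~3 is the one I would treat as the main (if still mild) obstacle, since it is the only identity involving $\tau$, and hence a multiplicative inverse, and since one must apply \Cref{cedo-lemma} with its correction term rather than naive distributivity. Writing $s = \sigma_x(y)$, so that $\tau_y(x) = s^{-1}\brmlt x - s^{-1}$, I would evaluate the product $\sigma_x(y)\brmlt\tau_y(x) = s\brmlt(s^{-1}\brmlt x - s^{-1})$ by invoking \Cref{cedo-lemma} in the form $z\brmlt(a-b) - z = z\brmlt a - z\brmlt b$ with $z = s$, $a = s^{-1}\brmlt x$, $b = s^{-1}$. This rewrites the product as $s\brmlt(s^{-1}\brmlt x) - s\brmlt s^{-1} + s$. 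Associativity turns $s\brmlt(s^{-1}\brmlt x)$ into $(s\brmlt s^{-1})\brmlt x = 0\brmlt x = x$ by the preliminary fact, while $s\brmlt s^{-1} = 0$; the expression collapses to $x + s = x + (x\brmlt y - x) = x\brmlt y$, which is the claim.

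The only places I expect care to be needed are bookkeeping ones: keeping track of the ``$-z$'' correction both in \Cref{cedo-lemma} and in distributivity~\eqref{dstr}, and remembering that the symbol $0$ plays the double role of additive and multiplicative identity so that $0\brmlt x = x$. That no deeper structure (nondegeneracy, involutivity) is ever used is a useful sanity check that the argument is purely formal.
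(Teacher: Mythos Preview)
Your proof is correct and follows exactly the approach the paper indicates: direct computation from the definitions using distributivity~\eqref{dstr} and \Cref{cedo-lemma}. The paper itself does not spell out the details, merely labelling them ``straightforward calculations'' and citing lemma~2.6 of~\cite{cedo2} for the first two items; your write-up supplies precisely those calculations, including the preliminary observation that $0$ is the $\brmlt$-identity, and handles item~3 in the same way (via \Cref{cedo-lemma} applied to $s\brmlt(s^{-1}\brmlt x - s^{-1})$).
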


% --------------------- COMMENT STARTS ---------------------- %
\begin{comment}

\begin{proof}
\hfill
\begin{enumerate}
	\item See lemma 2.6 of~\cite{cedo2}
	\item See lemma 2.6 of~\cite{cedo2}
	\item For $z\in B$ we calculate
	\[
	 	\sigma_x \sigma_y (z) 
	 	= x\brmlt (y\brmlt z -y) - x 
	 	= x\brmlt y \brmlt z - x\brmlt y
	 	 = \sigma_{x\brmlt y}(z),
	\]
	where we used~\ref{cedo-lemma}.
	\item We calculate:
	\[
	 	\sigma_x(y) \brmlt \tau_y(x)
	 	= \sigma_x(y) \brmlt (\sigma_x(y) ^{-1} \brmlt x - \sigma_x(y)^{-1})
	 	= \sigma_x(y) \brmlt \sigma_x(y) ^{-1} \brmlt x - \sigma_x(y) \brmlt \sigma_x(y)^{-1} + \sigma_x(y)
	 	= x + x\brmlt y -x 
	 	=x,
	\]
	where we used~\ref{cedo-lemma}.
	\item Let $a,b \in B$ and assume $\sigma_a = \sigma_b$. Then using items 1 and 2 from the list we have
	\[
	\sigma_{a\brmlt b^{-1}} = \sigma_a \sigma_{b^{-1}} = \sigma_b \sigma_b{-1} = \sigma_0 = \id.
	\]
Hence 
\(
\sigma_{a\brmlt b^{-1}} (0) = 0
\)
\ie\

\end{enumerate}
\end{proof}

\end{comment}

% --------------------- COMMENT ENDS ---------------------- %

\noindent Similarly, we can define \emph{right braces} by replacing condition \eqref{dstr} by 
\begin{equation} \label{right-brc-dstr}
    (x+y) \brmlt z = x\brmlt z + y\brmlt z -z.
\end{equation}

\begin{dfn}
Let $\abrace$ be a left brace.
The \emph{opposite brace} of $\abrace$, denoted by
$(B^{\text{op}}, +, \brmlt)$,
is the right brace with underlying set and additive group the same as $\abrace$ and with multiplicative group the opposite of the multiplicative group of $\abrace$.
\end{dfn}

\noindent If for a brace both \eqref{dstr} and \eqref{right-brc-dstr} hold it is called a \emph{two-sided} brace.

Two important examples of braces are nilpotent rings and the wider class of Jacobson radical rings. 
Recall that a ring $\nring$ is nilpotent if and only if there is a $n\in \nats$  such that $a_1 \rngmlt a_2 \rngmlt ... \rngmlt a_n = 0$ for any $a_1,a_2,...,a_n \in N$. 
A ring $\aring$ is Jacobson radical if and only if for any $x\in R$ there is a unique $y\in R$ such that $x+y +x\rngmlt y = 0$. 
Another characterisation of Jacobson radical rings is that, when $R$ is embedded into a ring with multiplicative identity $1_{\rngmlt}$, the elements of the form $r+1_{\rngmlt},\ r\in R,$ have a $\rngmlt$-inverse of the same form. 
Jacobson radical rings are braces with multiplication the adjoint multiplication of the ring $x\brmlt y := x+y+x\rngmlt y$ and addition the ring addition. 
It is not hard to see that all two-sided braces are actually Jacobson radical rings.

Motivated by ring multiplication, we consider the following operation on braces:

\begin{notation}
Let $\abrace$. For $x,y \in B$ define
\[
x\strmlt y := x\brmlt y - x -y.
\]
\end{notation}

Obviously $\strmlt = \rngmlt$ for a Jacobson radical ring $\aring$. 

Since the appearance of braces, it was not known whether $\strmlt$ is associative if the brace is one-sided. Lau recently showed that this is not the case \cite{ivan}; $\strmlt$ is associative only if the brace is a Jacobson radical ring.

A brace is called \emph{trivial} if multiplication coincides with addition.
The \emph{socle} of a brace is the largest subset for which multiplication from the right by any element of the brace is trivial.
\begin{dfn} \label{dfn-soc}
Let $\abrace$ be a brace. The \emph{socle} of $B$ is
\[
\soc{B} := \{ a\in B : a\brmlt b = a+b,
              \text{ for all } b\in B       \}.
\]
\end{dfn}

%The following proposition is a simple consequence of the last two definitions.

\begin{prop}\label{soccle-fixpoint}
Let $\abrace$ be a left brace, $r$ be the Yang-Baxter map of $\abrace$ and $X\subseteq B$ such that $(X,r)$ is a solution to the YBE. Then $z\in X$ is a fixpoint of $\sigma_x$ for every $x\in X$ if and only if 
$z\in X \cap \soc{B^{\text{op}}}$.
\end{prop}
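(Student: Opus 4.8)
The plan is to translate both conditions into identities in the brace and reduce them to a single equation. First I would record that, by the definition of the Yang-Baxter map in Theorem~\ref{yb-brc-map}, the fixpoint condition $\sigma_x(z)=z$ is equivalent to $x\brmlt z - x = z$, i.e.\ to $x\brmlt z = x + z$ (using that $(B,+)$ is abelian). Next I would unwind the right-hand side: since the opposite brace carries the multiplication $a\brmlt^{\op} b = b\brmlt a$, applying Definition~\ref{dfn-soc} to $B^{\op}$ gives $z\in\soc{B^{\op}}$ iff $b\brmlt z = b + z$ for every $b\in B$, equivalently $\sigma_b(z)=z$ for every $b\in B$. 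Thus the whole statement reduces to the claim that $\sigma_x(z)=z$ for all $x\in X$ (with $z\in X$) holds iff $\sigma_b(z)=z$ for all $b\in B$. The place to be careful here is getting the side of the multiplication right when passing to $B^{\op}$: it is multiplication on the \emph{left} by arbitrary elements that must be trivial, which is exactly what $\sigma_\bullet(z)$ controls.

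The backward implication is then immediate: if $\sigma_b(z)=z$ for all $b\in B$, I restrict to $b\in X\subseteq B$ to obtain the fixpoint condition, and $z\in X$ is assumed.

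For the forward implication, which carries the content, I would introduce the set $S := \{\,b\in B : \sigma_b(z)=z\,\}$ and show it is a subgroup of the multiplicative group $(B,\brmlt)$. It contains $0$, since $0$ is the multiplicative identity and $\sigma_0=\id$. It is closed under $\brmlt$ because $\sigma_a\sigma_b=\sigma_{a\brmlt b}$; although Proposition~\ref{sigma-homo} states this only for elements of $X$, the same computation via Lemma~\ref{cedo-lemma} gives it for all $a,b\in B$, so $\sigma_{a\brmlt b}(z)=\sigma_a(\sigma_b(z))=\sigma_a(z)=z$ whenever $a,b\in S$. It is closed under inverses because $\sigma_{b^{-1}}\sigma_b=\sigma_{b^{-1}\brmlt b}=\sigma_0=\id$ forces $\sigma_{b^{-1}}=\sigma_b^{-1}$, so $\sigma_b(z)=z$ implies $\sigma_{b^{-1}}(z)=z$. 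The hypothesis $\sigma_x(z)=z$ for all $x\in X$ says precisely that $X\subseteq S$; since $X$ generates $(B,\brmlt)$ it follows that $S=B$, which is the desired socle condition.

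The hard part is exactly this last extension from $X$ to $B$: the fixpoint hypothesis is quantified only over $X$, whereas membership in $\soc{B^{\op}}$ is quantified over all of $B$, so the argument genuinely needs $S$ to be a subgroup together with the fact that $X$ generates the multiplicative group (as it does for the structure brace of the solution). If one does not wish to assume $X$ generates $B$, one should first pass to the subbrace generated by $X$; all of the above is then a formal consequence of the composition law for the $\sigma$ maps and of the equivalence $\sigma_b(z)=z \Leftrightarrow b\brmlt z = b+z$.
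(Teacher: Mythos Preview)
Your backward direction matches the paper's proof exactly; the paper's entire argument is the single line ``$\sigma_x(z)=x\brmlt z-x$, so $\sigma_x(z)=z$ for all $x\in X$ iff $x\brmlt z=x+z$ for all $x\in X$, \ie\ $z\in X\cap\soc{B^{\op}}$.''

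You are right to worry about the forward direction: membership in $\soc{B^{\op}}$ is, by Definition~\ref{dfn-soc}, quantified over all of $B$, while the fixpoint hypothesis is quantified only over $X$. The paper silently identifies the two. Your subgroup argument for $S=\{b\in B:\sigma_b(z)=z\}$ is correct, but the conclusion $S=B$ rests on $X$ generating $(B,\brmlt)$, and that is \emph{not} among the hypotheses: $X$ is merely a subset of $B$ on which $r$ restricts to a solution. Without that generation hypothesis the forward direction can fail. For instance, take the Jacobson radical ring $R=2\ints/16\ints$ with the adjoint brace structure and let $X=\{0,4,8,12\}$. One checks that $X$ is a subbrace and that $x\rngmlt z\equiv 0$ for all $x,z\in X$, so $\sigma_x(z)=z$ for every $x,z\in X$; yet $\soc{R^{\op}}=\{z:2z\equiv 0\pmod{16}\}=\{0,8\}$, so $z=4$ is a fixpoint of every $\sigma_x$, $x\in X$, but $z\notin\soc{R^{\op}}$.

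In short: the paper's proof tacitly treats the two quantifications as equivalent; you correctly see that they need not be, but then import an assumption that is not stated. Strictly speaking, the forward implication as written is false for general $X\subseteq B$. The paper only ever invokes the backward (trivial) direction, in Proposition~\ref{fctr-thm-socle}, so no harm is done downstream.
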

\begin{proof}
Since $\sigma_x (z) = x\brmlt z - x$, we see that $z$ is a fixpoint for every $x\in X$ if and only if $x\brmlt z = x+z$ for each $x\in X$, \ie\ 
$z\in X \cap \soc{B^{\text{op}}}.$
\end{proof}

For more on braces and their relationship to the Yang-Baxter equation the reader is pointed to~\cite{cedo2}.
%%%%%%%%%%%%%%%%%%%%%%%%%%%%%%%%%%%%%%%%%%%%%%%%%%%%%%%%%%%%
%%%%%%%%%%%%%%%%%%%%%%%%%%%%%%%%%%%%%%%%%%%%%%%%%%%%%%%%%%%%

\section{Reflections for Braces}\label{section-reflection-for-braces}

A particular class of maps, first identified in \cite{refl}, gives a very simple criterion for being a reflection. 
\begin{dfn}
Let $(X,r),\ r(x,y) = (\sigma_x(y), \tau_y(x)),$ be a set-theoretic solution of the YBE. 
A map $k : X \to X$ is called \emph{$\mathcal{G} (X,r)-$equivariant} if $k\sigma_x = \sigma_x k$ for all $x\in X$. 
We denote the set of \gequiv\ maps of $(X,r)$ by $\setGequiv$.
\end{dfn}

\begin{thm}[Theorem 1.9, \cite{refl}] \label{equiv-refl}
Let $(X,r)$ be an involutive non-degenerate set-theoretic solution of the YBE. Every $\mathcal{G} (X,r)-$equivariant map $k : X\to X$ is a solution to the reflection equation.
\end{thm}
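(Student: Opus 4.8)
The plan is to reduce the verification to a single coordinate and then exploit involutivity. By the result quoted in the remark following Example~\ref{refl-eq-ex-mod-n} (Theorem 1.8 of~\cite{refl}), since $(X,r)$ is nondegenerate and involutive it suffices to check that the \emph{first} coordinates of the two sides of~\eqref{refl-eq} agree on every $(a,b)\in X\times X$. This is the main simplification: the second coordinates of both four-fold composites are considerably more involved, whereas the first coordinates will collapse cleanly.

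First I would trace $(a,b)$ through each side, writing $k_2=\id\times k$. Applying the right-hand composite $k_2\,r\,k_2\,r$ produces first coordinate $\sigma_{\sigma_a(b)}\big(k(\tau_b(a))\big)$, while the left-hand composite $r\,k_2\,r\,k_2$ produces first coordinate $\sigma_{\sigma_a(k(b))}\big(k(\tau_{k(b)}(a))\big)$. Here I use that $r$ preserves $X\times X$, so that every index that appears, such as $\sigma_a(k(b))$, again lies in $X$ and the equivariance hypothesis is applicable to it.

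Next I would invoke $\mathcal G(X,r)$-equivariance, $k\sigma_x=\sigma_x k$ for all $x\in X$, to pull the inner $k$ to the front on each side, turning the right-hand first coordinate into $k\big(\sigma_{\sigma_a(b)}(\tau_b(a))\big)$ and the left-hand first coordinate into $k\big(\sigma_{\sigma_a(k(b))}(\tau_{k(b)}(a))\big)$. The crux is then the identity
\[
\sigma_{\sigma_x(y)}\big(\tau_y(x)\big)=x,\qquad x,y\in X,
\]
which follows directly from involutivity: writing $r^2=\id$ in coordinates gives $r\big(\sigma_x(y),\tau_y(x)\big)=(x,y)$, and reading off the first coordinate yields exactly $\sigma_{\sigma_x(y)}(\tau_y(x))=x$. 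Applying this with $(x,y)=(a,b)$ on the right and $(x,y)=(a,k(b))$ on the left collapses both first coordinates to $k(a)$, which completes the verification.

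I expect the only real obstacle to be organisational rather than conceptual: keeping the nested $\sigma$'s and $\tau$'s in the two composites straight, and confirming that each intermediate element stays in $X$ so that both equivariance and the involutivity identity legitimately apply. Once the identity $\sigma_{\sigma_x(y)}(\tau_y(x))=x$ is isolated the argument is immediate, and this is precisely the step where the involutive hypothesis is indispensable, consistent with the earlier remark that the first-coordinate reduction fails without it.
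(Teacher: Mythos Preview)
The paper does not supply its own proof of this statement: it is quoted verbatim as Theorem~1.9 of~\cite{refl} and used as a black box. There is therefore nothing in the present paper to compare against.

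That said, your argument is correct and is essentially the standard one. The reduction to first coordinates via Theorem~1.8 of~\cite{refl} is legitimate under the involutive nondegenerate hypothesis; your computation of the two first coordinates is accurate; the use of $\mathcal G(X,r)$-equivariance to pull $k$ outside each $\sigma$ is exactly the intended application of the hypothesis; and the identity $\sigma_{\sigma_x(y)}(\tau_y(x))=x$ is precisely the first coordinate of $r^2=\id$. The closure remarks (that $k(b)\in X$ and hence $\sigma_a(k(b))\in X$, etc.) are also correct and worth making explicit. Nothing is missing.
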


Moreover, \gequiv\ maps are in a sense ``building blocks'' for more complicated reflections (cf. theorem~\ref{refl-eq-thm-sols-equiv}).

In this section we obtain \gequiv\ maps for an arbitrary brace, hence obtaining reflections of the brace by \ref{equiv-refl}. 

First we observe that the \gequivB\ maps of a brace $\abrace$ form a right near-ring.

\begin{dfn} \label{dfn-near-ring}
Let $S$ be a set with two operations $+$ and $\cdot$. Then we call $(S,+,\cdot)$ a \emph{right near-ring} if
\begin{enumerate}
    \item $(S,+)$ is a group;
    \item $(S,\cdot)$ is a semi-group;
    \item $(x+y)\cdot z = x\cdot z + y\cdot z$ for all $x,y,z \in S$.
\end{enumerate}
\end{dfn}
We remind the reader that $(S,\cdot)$ is called a semi-group when the operation $\cdot$ is associative. Thus a right-near ring is the same as a ring, with the left-distributivity assumption dropped.

\begin{thm} \label{gequiv-near-ring}
Let $\abrace$ be a brace with Yang-Baxter map $r$. Then $(\setGequivB, +, .)$, where $.$ is function composition and $+$ is pointwise addition of maps, is a right near-ring. Moreover, the unit of $+$ is the zero map and $.$ has as unit the identity map.
\end{thm}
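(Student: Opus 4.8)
The plan is to verify the three right near-ring axioms of Definition~\ref{dfn-near-ring} together with the two unit claims, folding closure of $\setGequivB$ under each operation into the corresponding axiom. Throughout, the one structural fact that does the real work is that each $\sigma_x$ is a $(B,+)$ group homomorphism (Proposition~\ref{sigma-homo}, item~1); everything else is bookkeeping about pointwise addition and function composition.

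First I would check that $(\setGequivB, +)$ is an abelian group. Closure is the first genuinely nontrivial point: given equivariant $k_1, k_2$, for any $x,y \in B$ one has
\[
(k_1 + k_2)(\sigma_x(y)) = k_1(\sigma_x(y)) + k_2(\sigma_x(y)) = \sigma_x(k_1(y)) + \sigma_x(k_2(y)),
\]
using equivariance, while
\[
\sigma_x\bigl((k_1+k_2)(y)\bigr) = \sigma_x\bigl(k_1(y) + k_2(y)\bigr) = \sigma_x(k_1(y)) + \sigma_x(k_2(y)),
\]
where the last equality is precisely the additivity of $\sigma_x$; hence $k_1 + k_2 \in \setGequivB$. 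Associativity and commutativity are inherited from $(B,+)$. The zero map is the additive unit, and it is equivariant because $\sigma_x(0)=0$ for a homomorphism. The additive inverse of $k$ is $y\mapsto -k(y)$, which is equivariant since a homomorphism preserves negation. This establishes the group structure and the first unit claim.

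Next, $(\setGequivB, \cdot)$ is a semigroup. Associativity of function composition is automatic, so only closure needs attention: for equivariant $k_1,k_2$ the chain
\[
(k_1 k_2)\sigma_x = k_1(k_2 \sigma_x) = k_1(\sigma_x k_2) = (k_1\sigma_x)k_2 = (\sigma_x k_1)k_2 = \sigma_x(k_1 k_2)
\]
shows that $k_1 k_2$ is equivariant. The identity map is trivially equivariant and is the two-sided unit for composition, giving the second unit claim.

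Finally, right distributivity $(k_1 + k_2)\cdot k_3 = k_1\cdot k_3 + k_2\cdot k_3$ holds for arbitrary maps: evaluating at a point gives $(k_1+k_2)(k_3(y)) = k_1(k_3(y)) + k_2(k_3(y))$, which is just the definition of pointwise addition precomposed with $k_3$. I expect no real obstacle here, and the asymmetry makes transparent why left distributivity is \emph{absent}, since $k_3\bigl(k_1(y)+k_2(y)\bigr) = k_3(k_1(y)) + k_3(k_2(y))$ would require additivity of $k_3$, which is not assumed; this is exactly why one obtains a near-ring rather than a ring. The only places where the brace structure is used essentially are the additive-closure and additive-inverse steps, so the homomorphism property of $\sigma_x$ is the crux of the argument.
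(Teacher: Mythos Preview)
Your proof is correct and follows essentially the same approach as the paper: both verify closure under addition via the additivity of $\sigma_x$ from Proposition~\ref{sigma-homo}, closure under composition by the obvious chain, and right distributivity directly from the definitions. The only cosmetic difference is the additive-inverse step: the paper shows $x\mapsto -x$ is \gequivB\ by a direct brace computation using Lemma~\ref{brcmlt-ids} and then composes, whereas you invoke that a group homomorphism preserves negation---a slightly cleaner route to the same conclusion.
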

\begin{proof}
It is easy to see that the zero map and the identity map are indeed \gequivB\ and they are the units for $+$ and $.$ respectively.

The fact that $.$ right-distributes over $+$ follows directly from the definitions of composition and addition of maps.

To show $\setGequivB$ is closed under addition, let $f,g \in \setGequivB $ and $x,y \in B$. 

We have:
\begin{align*}
      \sigma_x (f+g)(y) &= \sigma_x (f(y) + g(y))\\
                        &= \sigma_x f(y) + \sigma_x(g(y)) 
                        &&\text{by \ref{sigma-homo}}
                        \\
                        &= f \sigma_x(y) + g \sigma_x(y)
                        &&\text{$f,g$ are \gequivB.}
                        \\
                        &= (f+g) \sigma_x (y)
    \end{align*}
which shows that $\setGequivB$ is closed under $+$

To show $\setGequivB$ is closed under $.$ we calculate:
\begin{align*}
        \sigma_x (fg)(y) &= \sigma_x f g(y)
                         \\
                         &= f \sigma_x g (y)
                         &&\text{$f$ is \gequiv}
                         \\
                         &= f g \sigma_x (y)
                         &&\text{$g$ is \gequiv}
                         \\
                         &= (fg) \sigma_x (y)
    \end{align*}
thus $\setGequivB$ is closed under function composition.

Finally to show $-f \in \setGequivB$ observe that the map $x \mapsto -x$ is $\mathcal{G} (B,r)$--equivariant:
\begin{align*}
    \sigma_x (-y) &= x\brmlt (-y) -x \\
                  &= 2x - x\brmlt y -x
                  &&\text{by \ref{brcmlt-ids}}
                  \\
                  &= -(x\brmlt y -x)
                  \\
                  &=- \sigma_x(y);
\end{align*}
thus its composition with $f$, $-f$, is \gequivB.
\end{proof}

\begin{rmk}
In the above theorem instead of $B$ itself we could have considered $X\subseteq B$. The minimal requirements on $X$ are that $X$ is an additive subgroup of $B$ (which is equivalent to $f-g$ being an $X \to X$ map, for $f,g: X\to X$) and that $(X,r)$ is a set-theoretic solution to the YBE (so that $\setGequiv$ makes sense). The latter amounts to $r(X,X) \subseteq X$. But these together imply that $X$ is a subbrace, as the next proposition shows.
\end{rmk}

\begin{prop}
Let $\abrace$ be a brace with Yang-Baxter map $r$ and $X\subseteq B$ an additive subgroup of $B$ such that $r(X,X) \subseteq X$. Then $X$ is a subbrace.
\end{prop}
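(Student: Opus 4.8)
The plan is to verify the brace axioms of Definition~\ref{dfn-brc} for $(X,+,\brmlt)$ one at a time, using throughout that all three operations on $X$ are simply the restrictions of those on $B$. Axiom~(1), that $(X,+)$ is an abelian group, is exactly the hypothesis. The distributivity law \eqref{dstr} holds for all elements of $B$ and hence in particular for elements of $X$, so it is inherited for free; likewise $\brmlt$ is associative on $X$ because it is associative on $B$. Since $X$ is an additive subgroup it contains $0$, and $0$ is the identity of $(B,\brmlt)$: taking $y=z=0$ in \eqref{dstr} gives $x\brmlt 0 = 2\,(x\brmlt 0)-x$, so $x\brmlt 0 = x$, i.e.\ $0$ is a right identity of the group $(B,\brmlt)$ and therefore its two-sided identity. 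Thus the only substantive content is axiom~(2): that $(X,\brmlt)$ is a group.

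Closure under $\brmlt$ is immediate. For $x,y\in X$ the hypothesis $r(X,X)\subseteq X$ gives $\sigma_x(y)=x\brmlt y-x\in X$, and since $x\in X$ and $X$ is an additive subgroup we obtain $x\brmlt y=\sigma_x(y)+x\in X$. Combined with associativity and $0\in X$, this already shows that $(X,\brmlt)$ is a submonoid of $(B,\brmlt)$.

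I expect the main obstacle to be closure under multiplicative inverses, and I would reduce it to a surjectivity statement. From $x\brmlt x^{-1}=0$ one computes $\sigma_x(x^{-1})=x\brmlt x^{-1}-x=-x$, so $x^{-1}=\sigma_x^{-1}(-x)$; it therefore suffices to show that for each $x\in X$ the additive homomorphism $\sigma_x$ (Proposition~\ref{sigma-homo}) restricts to a \emph{bijection} of $X$, for then $\sigma_x^{-1}$ sends $-x\in X$ back into $X$. Injectivity of $\restr{\sigma_x}{X}$ is inherited from $B$, so everything comes down to surjectivity, equivalently to the nondegeneracy of the restricted solution $\bigl(X,\restr{r}{X\times X}\bigr)$. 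To attack this I would exploit that the Yang--Baxter map of a brace is involutive (Theorem~\ref{yb-brc-map}): since $r$ is an involution of $B\times B$ with $r(X\times X)\subseteq X\times X$, applying $r$ twice to any $(x,y)\in X\times X$ recovers $(x,y)$, so $r$ maps $X\times X$ \emph{onto} itself and $\restr{r}{X\times X}$ is a bijection. The delicate point is to upgrade this bijectivity of the pair-map to per-coordinate surjectivity of each $\sigma_x$ on $X$; the natural lever is the identity $\tau_y(x)=\sigma_{\sigma_x(y)}^{-1}(x)$, which follows from the definition of $\tau$ together with $\sigma_{a^{-1}}=\sigma_a^{-1}$ (a consequence of \eqref{sigma-compose} and $\sigma_0=\id$), combined with the closure $\tau_y(x)\in X$. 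In the finite case the obstacle disappears entirely: left multiplication $\restr{L_x}{X}$ is an injective self-map of the finite set $X$, hence surjective, so some $y\in X$ satisfies $x\brmlt y=0$, giving $x^{-1}=y\in X$.
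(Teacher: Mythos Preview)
Your reduction to closure under $\brmlt$-inverses is correct, and your computation $x^{-1}=\sigma_x^{-1}(-x)$ is fine, but the proof is genuinely incomplete: you never establish surjectivity of $\restr{\sigma_x}{X}$ in the infinite case, and the tools you list do not close the gap. The involutivity argument gives only that $\restr{r}{X\times X}$ is a bijection of $X\times X$, and the identity $\sigma_{\sigma_x(y)}(\tau_y(x))=x$ then says merely that each $x\in X$ lies in $\sigma_u(X)$ for every $u$ in the \emph{image} $\sigma_x(X)$. To conclude $a\in\sigma_u(X)$ for an \emph{arbitrary} $u\in X$ you would need some $y\in X$ with $\sigma_a(y)=u$, which is exactly the surjectivity you are trying to prove. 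In fact the implication runs the other way: once one knows $x^{-1}\in X$, the formula $\sigma_x^{-1}=\sigma_{x^{-1}}$ immediately gives $\restr{\sigma_x}{X}$ bijective. So routing the inverse question through surjectivity is backwards.

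The fix is already contained in your own ``lever'', used differently. Rather than aiming for surjectivity, evaluate the second coordinate of $r$ at the basepoint $0\in X$: since $\sigma_0(y)=y$, the formula $\tau_y(x)=(\sigma_x(y))^{-1}\brmlt x-(\sigma_x(y))^{-1}$ with $x=0$ gives
\[
\tau_y(0)=y^{-1}\brmlt 0 - y^{-1}=-y^{-1}.
\]
Because $r(X\times X)\subseteq X\times X$ and $0,y\in X$, we get $-y^{-1}\in X$, hence $y^{-1}\in X$. This handles the infinite case with no surjectivity needed. This is precisely the paper's approach: compute $\tau_{y_1}(x_1)=a^{-1}\brmlt x_1-a^{-1}$ after arranging $\sigma_{x_1}(y_1)=a$, and then specialise $x_1$. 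The clean specialisation is $x_1=0$, $y_1=a$, for which $\sigma_{x_1}(y_1)=a$ holds trivially and no appeal to nondegeneracy of the restricted solution is required.
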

\begin{proof}
Since $(X,+) \leq (B,+)$ it suffices to show $(X,\brmlt) \leq (B,\brmlt)$. Since the multiplicative and the additive unit of a brace coincide, we just need to show $X$ is closed under $\brmlt$ and multiplicative inverses.

Since $r(X,X) \subseteq X$, for $x,y \in X$ we have $\sigma_x(y) \in X$, whence
\[
x\brmlt y = \sigma_x(y) + x \in X,
\]
which proves the former.

To show closure under inverses, let $a\in X$. For a fixed $x_1\in X$ to be determined later we can find a single $y_1\in X$ such that $\sigma_{x_1}(y_1) = a,$ since $(X,r)$ is a nondegenerate solution.

Then
\begin{align*}
    \tau_{y_1}(x_1) &= (\sigma_{x_1} (y_1) )^{-1} \brmlt x_1 - (\sigma_{x_1} (y_1) )^{-1}
    \\
    &= a^{-1} \brmlt x_1 - a^{-1}
\end{align*}

We know $\tau_{y_1}(x_1) \in X$, hence taking $x_1 = a$ gives $-a^{-1} \in X$ and since $X$ is an additive subgroup we have $a^{-1} \in X$.
\end{proof}

The next theorem gives two particular \gequiv\ maps for an arbitrary $X\subseteq B$.

\begin{thm} \label{k1-k2-equiv}
Let $\abrace$ be a brace, $r$ the Yang-Baxter map associated to the brace and $X\subseteq B$ such that $(X,r)$ is a nondegenerate involutive solution to the YBE. Let $c\in B$ be central in $(B,\brmlt)$.

Then the map
\begin{equation} \label{k1}
    k_1(x) = c\brmlt x -c,\ x\in X
\end{equation}
is \gequiv, assumming $k_1(X) \subseteq X$.

Moreover the map
\begin{equation} \label{k2}
    k_2(x) = c\brmlt x + c, \ x\in X
\end{equation}
is \gequiv, assumming $k_2(X) \subseteq X$, if and only if $2\ c\brmlt x = 2x + 2c,$ for all $x\in X$.
\end{thm}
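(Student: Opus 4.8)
The plan is to verify the defining equivariance condition $k\sigma_x = \sigma_x k$ for every $x\in X$ directly, reducing each composite to a common normal form using only Lemma~\ref{cedo-lemma}, the distributivity axiom~\eqref{dstr}, and the centrality of $c$ in $(B,\brmlt)$. Since the hypotheses $k_1(X)\subseteq X$ and $k_2(X)\subseteq X$ guarantee that $k_1,k_2$ are genuine self-maps of $X$, it suffices to compare the two composites pointwise on an arbitrary $y\in X$.

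For $k_1$ I would first expand $k_1\sigma_x(y) = c\brmlt(x\brmlt y - x) - c$ and apply Lemma~\ref{cedo-lemma} (with $z=c$ and difference $x\brmlt y - x$) to collapse it to $c\brmlt x\brmlt y - c\brmlt x$. Symmetrically, expanding $\sigma_x k_1(y) = x\brmlt(c\brmlt y - c) - x$ and again invoking Lemma~\ref{cedo-lemma} yields $x\brmlt c\brmlt y - x\brmlt c$. The two normal forms differ only in the order of $x$ and $c$ inside the products, so centrality of $c$ identifies them at once; this establishes that $k_1$ is \gequiv\ unconditionally.

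For $k_2$ the same bookkeeping is carried out, but now one extra additive term survives on each side. Expanding $k_2\sigma_x(y) = c\brmlt(x\brmlt y - x) + c$ via Lemma~\ref{cedo-lemma} produces $c\brmlt x\brmlt y - c\brmlt x + 2c$, while expanding $\sigma_x k_2(y) = x\brmlt(c\brmlt y + c) - x$ and splitting the sum $c\brmlt y + c$ by the distributivity axiom~\eqref{dstr} gives $x\brmlt c\brmlt y + x\brmlt c - 2x$. Using centrality to match the $y$-dependent terms $c\brmlt x\brmlt y$ and $x\brmlt c\brmlt y$, these cancel, and the equation $k_2\sigma_x = \sigma_x k_2$ reduces to the $y$-free identity $-c\brmlt x + 2c = c\brmlt x - 2x$, that is, $2\,c\brmlt x = 2x + 2c$.

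The final point to get right is the logical structure of the equivalence. Because the $y$-dependent terms cancel identically once centrality is invoked, the difference $k_2\sigma_x(y) - \sigma_x k_2(y)$ equals $2x + 2c - 2\,c\brmlt x$ for \emph{every} $y$; hence, for each fixed $x$, the pointwise equation holds for all $y$ precisely when $2\,c\brmlt x = 2x + 2c$. Quantifying over $x$ then yields the stated ``if and only if''. The ``if'' direction is immediate, and the ``only if'' direction follows by reading off the residual identity. I expect the only real care needed is this last step: confirming that the $y$-terms genuinely cancel for all $y$ (so that the residual identity is the sole constraint and no further hidden conditions appear), rather than checking only a convenient value such as $y=0$.
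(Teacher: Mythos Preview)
Your proof is correct and follows essentially the same route as the paper: both compute $\sigma_x k_i(y)$ and $k_i\sigma_x(y)$ directly, simplify via Lemma~\ref{cedo-lemma} and the distributivity axiom~\eqref{dstr}, and then invoke centrality of $c$ to match the results. Your closing remark about why the $y$-dependent terms cancel identically (so the residual equation $2\,c\brmlt x = 2x+2c$ is the \emph{only} constraint) is a nice explicit justification of the ``only if'' direction that the paper leaves implicit.
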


\begin{proof}
Fix $x\in X$ and let $y\in X$ be arbitrary.

For $k_1$ we have
\begin{align*}
    \sigma_x k_1 (y)
    &= \sigma_x ( c\brmlt y -c )
    \\
    &= x\brmlt ( c\brmlt y -c) - x
    \\
    &= x\brmlt c \brmlt y -x\brmlt c
    &&{\text{  by \ref{cedo-lemma}}}&&
\end{align*}

For $k_1 \sigma_x$ we have
\begin{align*}
    k_1 \sigma_x (y)
    &= k_1 (x\brmlt y - x)
    \\
    &= c\brmlt(x\brmlt y-x) -c 
    \\
    &= c\brmlt x\brmlt y - c\brmlt x
    &&{\text{  by \ref{cedo-lemma}}}&&
\end{align*}
We see that for $c$ central in $(B,\brmlt)$ both sides are equal.

For $k_2$ we proceed similarly:
\begin{align*}
    \sigma_x k_2 (y)
    &= \sigma_x(c\brmlt y +c)
    \\
    &= x\brmlt (c\brmlt y + c) -x
    \\
    &=  x\brmlt c \brmlt y + x\brmlt c - 2x
    &&{\text{  by \eqref{dstr}}}&&
\end{align*}
And for $k_2 \sigma_x (y)$ we have:
\begin{align*}
 k_2 \sigma_x (y) 
 &=  k_2 (x\brmlt y -x) 
 \\
 &= c\brmlt (x\brmlt y -x) + c
 \\
 &= c\brmlt x\brmlt y - c\brmlt x + 2c
 &&{\text{  by \ref{cedo-lemma}}}&&
\end{align*}

Hence $k_2$ is \gequiv\ if and only if
\begin{align*}
   {}&& \sigma_x k_2 (y) &= k_2 \sigma_x (y)
   \\
   {\Leftrightarrow}&&
      x\brmlt c \brmlt y + x\brmlt c -2x
   &= 
      c\brmlt x\brmlt y - c\brmlt x + 2c
   \\
   {\Leftrightarrow}&&
  2\ c\brmlt x &= 2x + 2c,
\end{align*}
for $x\in X$ arbitrary.
\end{proof}

\begin{rmk} \label{refl-central-other-ass-rmk}
The condition  $2\ c\brmlt x = 2x + 2c $ is satisfied if $c \in \soc{B}$. The latter is equivalent to 
\begin{equation} \label{refl-central-other-ass-eq-strmlt}
   c \strmlt  b= 0,\ b\in B.
\end{equation}
\end{rmk}

\begin{example}
Let $(N,+,\rngmlt)$ be a nilpotent ring. Then $(N,+,\brmlt)$, where $\brmlt$ is the adjoint multiplication, is a brace and $\strmlt$ is the ring multiplication.
Since $N$ is nilpotent, we can find $n\in\nats$ such that $a_1 \rngmlt ... \rngmlt a_n = 0$ for all $a_1,...,a_n \in N$. 
Since $\strmlt$ is the ring multiplication, the condition \eqref{refl-central-other-ass-eq-strmlt} is satisfied for $c=a_1 \rngmlt ... \rngmlt a_{n-1}$ for any $a_1,...,a_{n-1} \in B$. 
Let $X\subseteq N$ such that  the Yang-Baxter map $r$ associated to the brace $(N,+,\brmlt)$ when restricted to $X\times X$ makes $(X,r)$ a solution of the set-theoretic YBE. 
Then the maps $k_1(x) = c\brmlt x + c$ and $k_2(x) = c\brmlt x - c$ are reflections of $(X,r)$.
\end{example}

The socle of the brace gives rise to simple reflections:
\begin{prop} \label{socle-equiv}
Let $\abrace$ be a brace with Yang-Baxter map $r$ and $a\in \soc{B^{\op}}$. Then the constant map
\begin{equation}
    \cnst_a (x) = a
\end{equation}
is \gequivB.

Moreover, for $c\in B$ central in $(B,\brmlt)$ the map
\[
m_c (x) = c\brmlt x,\ x\in B
\]
is \gequivB\ if and only if $c\in \soc{B}$.
\end{prop}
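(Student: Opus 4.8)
The plan is to handle the two maps separately, in each case unwinding the equivariance condition $k\sigma_x = \sigma_x k$ into an identity in the brace that is then recognised as a socle condition.

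For the constant map $\cnst_a$, I would first observe that $\cnst_a \sigma_x(y) = a$ for every $y$, since $\cnst_a$ ignores its input, whereas $\sigma_x \cnst_a(y) = \sigma_x(a)$. Hence $\cnst_a$ is $\mathcal{G}(B,r)$-equivariant precisely when $\sigma_x(a) = a$ for every $x\in B$, i.e.\ when $a$ is a common fixpoint of all the maps $\sigma_x$. By Proposition~\ref{soccle-fixpoint} applied with $X = B$, this is exactly the statement $a\in \soc{B^{\op}}$, so the hypothesis delivers the claim at once. (Equivalently, one expands $\sigma_x(a) = x\brmlt a - x$ and notes that $a\in\soc{B^{\op}}$ means $x\brmlt a = x + a$, whence $\sigma_x(a) = a$.)

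For $m_c$ I would compute both composites directly. Using associativity of $\brmlt$, $\sigma_x m_c(y) = x\brmlt(c\brmlt y) - x = (x\brmlt c)\brmlt y - x$. For the other composite, applying Lemma~\ref{cedo-lemma} to $c\brmlt(x\brmlt y - x)$ yields $m_c\sigma_x(y) = c\brmlt(x\brmlt y) - c\brmlt x + c$. Now I invoke centrality of $c$ in $(B,\brmlt)$: since $x\brmlt c = c\brmlt x$, the leading terms agree, $(x\brmlt c)\brmlt y = c\brmlt(x\brmlt y)$ for all $y$, so after cancelling these the equivariance condition $\sigma_x m_c = m_c\sigma_x$ collapses to $-x = -c\brmlt x + c$, that is $c\brmlt x = x + c$ for every $x\in B$. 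Since $(B,+)$ is abelian this reads $c\brmlt x = c + x$ for all $x$, which is precisely the defining condition for $c\in\soc{B}$; as the entire computation is a chain of equivalences, the biconditional follows in both directions.

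The calculations are routine once set up; the one point requiring care is the application of Lemma~\ref{cedo-lemma}, where the trailing $+c$ must not be dropped. Omitting it would spuriously force $c\brmlt x = x$ rather than the correct $c\brmlt x = x + c$, and the socle characterisation would fail. Centrality is used only to align the $y$-dependent terms and is exactly what keeps the resulting condition clean: without it the two sides would retain an unmatched factor and the equivalence with membership in $\soc{B}$ would no longer hold.
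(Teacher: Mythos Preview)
Your proof is correct and follows essentially the same approach as the paper's: both treat the two maps separately, expand $\sigma_x k$ and $k\sigma_x$, and reduce the equivariance condition to the defining socle identity. The only cosmetic difference is that for $\cnst_a$ you route through Proposition~\ref{soccle-fixpoint} rather than expanding $\sigma_x(a)$ directly, but you also note the direct computation parenthetically, which is exactly what the paper does.
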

\begin{proof}
Since $a\in \soc{B^\op}$, we have $b\brmlt a = a+b$ for any $b\in B$. Thus for $x,y \in B$ arbitrary we have:
\begin{equation*}
    \sigma_x \cnst_a (y) = \sigma_x (a) = x\brmlt a - x = x+a - x = a = \cnst_a \sigma_x (y)
\end{equation*}

For the latter claim for $x,y \in B$ arbitrary we have:
\[
\sigma_x m_c(y) = \sigma_x(c\brmlt y) = x\brmlt c \brmlt y - x
\]
and 
\[
m_c \sigma_x (y) 
= m_c (x\brmlt y - x) 
= c \brmlt (x\brmlt y -x)
= c\brmlt x \brmlt y - c\brmlt x + c,
\]
where for the last equality we used \ref{cedo-lemma}.

Using that $c$ is central, we see the two expressions are equal if and only if 
$-x = - c\brmlt x + c$ 
\ie\ 
$c\in \soc{B}$.
\end{proof}

In view of the closure under pointwise addition and function composition of $\setGequivB$, we can obtain several more solutions to the reflection equation.

%%%%%%%%%%%%%%%%%%%%%%%%%%%%%%%%%%%%%%%%%%
%%%%% The composition of k_1 with itself m times 
%%%%% (called previously k_{1,m} \prm (x)) 
%%%%% has the same form as k_1 : 
%%%%% k_1^m (x)  = c^m \brmlt x -c^m
%%%%% The same is true for k_2
%%%%% (called previously k_{2,m}\prm (x))
%%%%% k_{2,m} \prm (x) = c^m \brmlt x + c^m
%%%%% So I removed them from the following corollary.
%%%%%%%%%%%%%%%%%%%%%%%%%%%%%%%%%%%%%%%%%%

\begin{cor}
Let $\abrace$ be a brace with Yang-Baxter map $r$ and let $c\in B$ be central in $(B,\brmlt)$. Then the following maps are \gequivB:

\begin{enumerate}
    \item 
    \begin{equation} \label{k1n}
        k_{1,n} (x) = c\brmlt x -c + n\ x,\ n\in \ints
    \end{equation}
    
    If we moreover assume that
    \[
    2\ c\brmlt b = 2b + 2c, \text{ for all } b\in B,
    \]
    then the following maps are also \gequivB:
    \item
    \begin{equation} \label{k2n}
        k_{2,n} (x) = c\brmlt x + c + n\ x,\ n\in \ints
    \end{equation}
    
    \item 
    \begin{equation}
        \widetilde{k}_m (x)= 
            c^{2m} \brmlt x + c^{2m}
            + 2\  \sum_{j=1}^{m-1} c^{2j}
            - 2\ \sum_{j=1}^{m} c^{2j-1}
            ,\ m\in \nats
    \end{equation}
    
    \item
    \begin{equation}
        \hatk_m (x)= 
            c^{2m} \brmlt x - c^{2m}
            - 2\  \sum_{j=1}^{m-1} c^{2j}
            + 2\ \sum_{j=1}^{m} c^{2j-1}
            ,\ m\in \nats  
    \end{equation}

    \item
    \begin{equation} \label{lm}
        l_{m,n} (x) = 2m c + (2m+n) x,\ m \in \nats, n\in \ints 
    \end{equation}
    
\end{enumerate}
\end{cor}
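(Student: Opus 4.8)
The plan is to realise every map in the list as a combination, under the two near-ring operations, of the maps we already know to be \gequivB, and then to invoke the closure of $\setGequivB$ under pointwise addition and function composition established in Theorem \ref{gequiv-near-ring}. The building blocks are $\id$ (the composition unit, hence \gequivB), the map $k_1(x)=c\brmlt x-c$ (which is \gequivB\ by Theorem \ref{k1-k2-equiv} applied with $X=B$, so that $k_1(B)\subseteq B$ holds automatically), and the map $k_2(x)=c\brmlt x+c$ (which is \gequivB\ by the same theorem once the standing hypothesis $2\,c\brmlt b=2b+2c$ for all $b\in B$ is imposed, as it is for items (2)–(5)). Writing $n\,\id$ for the $n$-fold sum of $\id$ in the additive group of $\setGequivB$ (permitting $n\in\ints$ by taking additive inverses), items (1) and (2) are immediate: $k_{1,n}=k_1+n\,\id$ and $k_{2,n}=k_2+n\,\id$, each \gequivB\ by closure under $+$.

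For item (5) I first note that $(k_1+k_2)(x)=2\,c\brmlt x$, which by the hypothesis equals $2x+2c$. Consequently $l_{m,n}=m\,k_1+m\,k_2+n\,\id$ evaluates to $x\mapsto 2m\,c+(2m+n)x$, and lies in the additive group generated by $k_1,k_2,\id$ inside $\setGequivB$, hence is \gequivB.

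The core of the argument is items (3) and (4), which I claim are iterated composites: $\tildek_m=(k_1 k_2)^{\circ m}$ and $\hatk_m=(k_2 k_1)^{\circ m}$. The starting point is the two one-step computations, carried out with Lemma \ref{cedo-lemma} and distributivity \eqref{dstr},
\[
k_1 k_2(x)=c^2\brmlt x + c^2 - 2c,\qquad k_2 k_1(x)=c^2\brmlt x - c^2 + 2c,
\]
which already give the base cases $\tildek_1=k_1 k_2$ and $\hatk_1=k_2 k_1$. I then verify the closed forms by induction on $m$: writing $\tildek_m(x)=c^{2m}\brmlt x + A_m$ with the displayed constant $A_m=c^{2m}+2\sum_{j=1}^{m-1}c^{2j}-2\sum_{j=1}^{m}c^{2j-1}$, I apply $k_1 k_2$ once more and expand $c^2\brmlt(c^{2m}\brmlt x + A_m)$ using associativity of $\brmlt$ together with the generalised distributivity of Lemma \ref{brcmlt-ids} (items 2–4) to reduce each factor $c^2\brmlt(\pm k\,c^{j})$; collecting terms should reproduce $A_{m+1}$, and symmetrically for $\hatk_m$ (whose constant is exactly $-A_m$). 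Once the closed forms are established, both families are \gequivB\ by closure of $\setGequivB$ under composition (Theorem \ref{gequiv-near-ring}), since $k_1$ and $k_2$ are.

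The main obstacle is purely computational: matching the alternating sums $2\sum_{j=1}^{m-1}c^{2j}-2\sum_{j=1}^{m}c^{2j-1}$ against the output of the inductive step. The bookkeeping is delicate because each application of $\brmlt$ to a sum of $\ell$ summands contributes a correction $-(\ell-1)c^2$, while each factor $c^2\brmlt(\pm k\,c^{j})$ unfolds through Lemma \ref{brcmlt-ids} into a signed combination of $c^2$ and $c^{j+2}$; keeping the signs and multiplicities aligned across the two index ranges is where errors are most likely. No idea beyond Theorem \ref{gequiv-near-ring} is required once these closed forms are in hand.
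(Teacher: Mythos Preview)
Your proposal is correct and follows essentially the same approach as the paper: both realise each map as a sum or composite of $k_1$, $k_2$ and $\id$ and then invoke the near-ring closure of $\setGequivB$ from Theorem~\ref{gequiv-near-ring}, with the induction for items (3) and (4) proceeding exactly as you outline (the paper likewise computes $k_1k_2(x)=c^2\brmlt x+c^2-2c$ and then expands $\tildek^{m+1}=\tildek\tildek^m$ via Lemma~\ref{brcmlt-ids}). Your observation that the constant in $\hatk_m$ is $-A_m$ is a clean shortcut the paper does not make explicit.
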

\begin{proof}
\hfill
\begin{enumerate}

\item %k_{1,n}
This is just the sum of $k_1$ with $n$ copies of $\id$ for $n\geq 0$ ($k_{1,0} = k_1$) and the sum of $k_1$ with $-n$ copies of the function $(x\mapsto -x)$ for $n<0$. Since all summands are in $\setGequivB$, their sum is also in $\setGequivB$, by virtue of theorem \ref{gequiv-near-ring}.

\item %k_{2,n}
This is just the sum of $k_2$ with $n$ copies of $\id$ for $n\geq 0$ ($k_{2,0} = k_2$) and the sum of $k_2$ with $-n$ copies of $(x\mapsto -x)$ for $n<0$.

\item %\widetilde{k}_m
Let $\tildek = k_1 k_2$. We will show that $\widetilde{k}_m (x)= (\tildek)^m (x)$.

For the base case we have:
\begin{align*}
    \tildek (x) = k_1 k_2 (x) &= c \brmlt (c\brmlt x + c) - c 
                \\
                &= c^2 \brmlt x + c^2 - 2c && \text{ by \eqref{dstr}}
                \\
                &= \widetilde{k}_1 (x).
\end{align*}
And for the inductive step:
\begin{align*}
   \tildek^{m+1} (x) &= 
                     \tildek \tildek ^{m} (x)
                     \\
                     &= 
                     \tildek \big( c^{2m} \brmlt x + c^{2m}
                     + 2\  \sum_{j=1}^{m-1} c^{2j}
                      - 2\ \sum_{j=1}^{m} c^{2j-1} \big)
                     \\
                     &= 
                     c^2 \brmlt \big( c^{2m} \brmlt x + c^{2m}
                     + 2\  \sum_{j=1}^{m-1} c^{2j}
                      - 2\ \sum_{j=1}^{m} c^{2j-1} \big)
                      \\
                      &{\ \ } + c^2 - 2c
                      \\
                      &=
                      c^{2m+2} \brmlt x + c^{2m+2}
                      + \sum_{j=1}^{m-1} c^2 \brmlt (2\ c^{2j})
                      \\
                      &{\ \ } +\sum_{j=1}^{m} c^2 \brmlt (-2\ c^{2j -1})
                      -2m\ c^2 + c^2 -2c && \text{ by \ref{brcmlt-ids}}
                      \\
                      &=
                      c^{2(m+1)} \brmlt x + c^{2(m+1)}
                      + \sum_{j=1}^{m-1} ( 2\ c^{2(j+1)}- c^2)
                      \\
                      &{\ \ } +\sum_{j=1}^{m} ( 3c^2 - 2c^{2j+1})
                      + (-2m + 1)\ c^2 -2c
                      && \text{ by \ref{brcmlt-ids}}
                      \\
                      &=
                      c^{2(m+1)} \brmlt x + c^{2(m+1)}
                      + 2 \sum_{j=2}^{m} ( c^{2j})
                      -2 \sum_{j=2}^{m+1} ( c^{2j-1})
                      \\
                      &{\ \ } + [-(m -1) + 3m + (-2m+1)]\ c^2 -2c
                      \\
                      &=
                      c^{2(m+1)} \brmlt x + c^{2(m+1)}
                      + 2\  \sum_{j=1}^{m} c^{2j}
                      - 2\ \sum_{j=1}^{m+1} c^{2j-1}
                     \\
                     &=
                     \tildek_{m+1}(x),
\end{align*}
as desired.

\item %\widehat{k}_n
We define $\hatk (x) = k_2 k_1 (x)$ and one can show, very similarly to the calculation above, that $\hatk_m = \hatk ^m$.

\item %l_{m,n}
We have 
\[
l_{m,n} (x) = m\ k_1(x) + m\ k_2(x) + n\ x.
\]
and then substitute $2\ c\brmlt x = 2c + 2x$.

\end{enumerate}

\end{proof}

\section{Reflections for Factorizable Rings} \label{section-reflections-for-factorizable-rings}
In this section we obtain solutions to the reflection equation that apply specifically to factorizable rings, \ie\ nilpotent rings whose adjoint group factorises. As the next proposition shows, such nilpotent rings are braces also with a multiplication that is not the adjoint ring multiplication.

\begin{prop}[Proposition 5.2, \cite{smokt-smokt}] \label{fctr-brc}
Let \((N,+,\rngmlt)\) be a nilpotent ring and let $ (N,+,\brmlt) $ be the brace with adjoint multiplication. Assume
\(
N = B \brmlt C
\) for two subgroups \(B,C\) of \((N,\brmlt)\). Then define for 
\(
x, y \in N 
\)
where 
\(
x = x_1 \brmlt x_2
\)
and
\(
x_1 \in B,\ x_2 \in C,
\)
the operation
\[
x \fctrmlt y := x_1 \brmlt y \brmlt x_2.
\]
Then \( (N,+,\fctrmlt) \) is a brace.
\end{prop}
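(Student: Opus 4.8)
The plan is to verify the three brace axioms of Definition \ref{dfn-brc} for $(N,+,\fctrmlt)$ directly, after first pinning down well-definedness. Axiom (1) is immediate, since the additive group is unchanged: $(N,+)$ is the abelian additive group of the ring. Before anything else, though, I would make explicit that the hypothesis $N = B\brmlt C$ is to be read as an \emph{exact} factorisation, i.e. that every $x\in N$ has a \emph{unique} expression $x = x_1\brmlt x_2$ with $x_1\in B$ and $x_2 \in C$; equivalently $B\cap C = \{0\}$, since from $b\brmlt c = b'\brmlt c'$ one gets $b'^{-1}\brmlt b = c'\brmlt c^{-1}\in B\cap C = \{0\}$. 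This uniqueness is exactly what makes $x\fctrmlt y := x_1\brmlt y\brmlt x_2$ well-defined, and I would record it as the first step. I would also note the structural fact powering the distributivity check: since $N$ is nilpotent, hence Jacobson radical, its adjoint brace $(N,+,\brmlt)$ is \emph{two-sided}, so both \eqref{dstr} and \eqref{right-brc-dstr} are available (concretely $a\brmlt b = a+b+a\rngmlt b$ with $\rngmlt$ distributing over $+$ on both sides).

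For axiom (2), that $(N,\fctrmlt)$ is a group, I would proceed in three short moves. The element $0 = 0\brmlt 0$ (the common identity lies in both subgroups) is a two-sided $\fctrmlt$-identity, since $0\fctrmlt y = 0\brmlt y\brmlt 0 = y$ and $x\fctrmlt 0 = x_1\brmlt 0\brmlt x_2 = x_1\brmlt x_2 = x$. Associativity is the pleasant part: writing $y = y_1\brmlt y_2$, the product $x\fctrmlt y = x_1\brmlt y_1\brmlt y_2\brmlt x_2$ has factorisation $(x_1\brmlt y_1)\brmlt(y_2\brmlt x_2)$ with $x_1\brmlt y_1\in B$ and $y_2\brmlt x_2\in C$ because $B,C$ are subgroups, so both $(x\fctrmlt y)\fctrmlt z$ and $x\fctrmlt(y\fctrmlt z)$ unwind to the same expression $x_1\brmlt y_1\brmlt z\brmlt y_2\brmlt x_2$. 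Finally the $\fctrmlt$-inverse of $x = x_1\brmlt x_2$ is $x_1^{-1}\brmlt x_2^{-1}$ (using $\brmlt$-inverses), as the direct computations of $x\fctrmlt(x_1^{-1}\brmlt x_2^{-1})$ and $(x_1^{-1}\brmlt x_2^{-1})\fctrmlt x$ both give $0$.

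For axiom (3), the left-brace distributive law, I would expand
\[
x\fctrmlt(y+z) = x_1\brmlt(y+z)\brmlt x_2,
\]
apply \eqref{dstr} to the inner factor to get $x_1\brmlt(y+z) = x_1\brmlt y + x_1\brmlt z - x_1$, and then right-multiply by $x_2$. Here two-sidedness enters: using \eqref{right-brc-dstr} (together with Lemma \ref{brcmlt-ids}, or simply expanding via $\rngmlt$) the correction terms cancel to give $(u+v-w)\brmlt x_2 = u\brmlt x_2 + v\brmlt x_2 - w\brmlt x_2$ for $u = x_1\brmlt y$, $v = x_1\brmlt z$, $w = x_1$ (the coefficients satisfy $1+1-1=1$). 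This produces $x_1\brmlt y\brmlt x_2 + x_1\brmlt z\brmlt x_2 - x_1\brmlt x_2 = x\fctrmlt y + x\fctrmlt z - x$, as required.

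I expect the distributivity step to be the main obstacle: it is the only place where the brace structure, rather than merely the group structure of $B$ and $C$, is used, and it genuinely relies on $(N,+,\brmlt)$ being two-sided — a left brace alone would not permit moving $x_2$ across a sum on the right. The only other point demanding care is the uniqueness of the factorisation, which I would settle up front as above.
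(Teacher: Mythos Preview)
The paper does not prove this proposition; it is quoted verbatim as Proposition~5.2 of \cite{smokt-smokt} and used as a black box. Your verification is correct and follows the expected route: the two nontrivial ingredients are exactly the ones you isolate, namely that the factorisation $N=B\brmlt C$ must be exact for $\fctrmlt$ to be well-defined, and that the adjoint brace of a nilpotent (hence Jacobson radical) ring is two-sided so that the right distributive law \eqref{right-brc-dstr} is available in the final step.
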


We can find such $B,C$ explicitly in terms of the ring operations:
\begin{prop}[Proposition 2.8, \cite{agata-leandro}] \label{nilp-rng-fctr}
Let $N$ be a nilpotent ring, $S$ be a subring of $N$ and $I$ an ideal of $N$ such that $S \cap I = \{0\}$ and the additive group factorises $N=S+I$. Then the adjoint group $(N,\brmlt)$ factorises exactly $N= S \brmlt I$.
\end{prop}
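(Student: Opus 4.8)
The plan is to show that each of $S$ and $I$ is a subgroup of the adjoint group $(N,\brmlt)$, and then to establish the two halves of an exact factorisation: that every element of $N$ can be written as $s\brmlt i$ with $s\in S$ and $i\in I$, and that this expression is unique. Recall that the adjoint multiplication is $x\brmlt y = x + y + x\rngmlt y$, with identity $0\in N$. First I would verify that $S$ is a subgroup of $(N,\brmlt)$: it contains $0$; it is closed under $\brmlt$ because $s_1\brmlt s_2 = s_1+s_2+s_1\rngmlt s_2\in S$ for a subring $S$; and it is closed under adjoint inverses, since the $\brmlt$-inverse of $x$ is the quasi-inverse $-x + x\rngmlt x - x\rngmlt x\rngmlt x + \cdots$, a finite sum by nilpotency whose terms all remain in the subring $S$. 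The identical argument, using that $I$ is an ideal (hence in particular a subring), shows that $I$ is a subgroup of $(N,\brmlt)$.

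The key observation driving the existence of the factorisation is that the adjoint product of an $S$-element and an $I$-element already displays its additive $S$/$I$-decomposition. Indeed, for $s\in S$ and $i\in I$ we have $s\brmlt i = s + (i + s\rngmlt i)$, and since $I$ is an ideal the term $s\rngmlt i$ lies in $I$, so the $S$-part is $s$ and the $I$-part is $i + s\rngmlt i$. Now take an arbitrary $n\in N$ and write its unique additive decomposition $n = a + b$ with $a\in S$ and $b\in I$, which exists because $N = S+I$ and is unique because $S\cap I=\{0\}$. Matching additive parts shows that any factorisation $n = s\brmlt i$ must have $s = a$, which reduces the whole problem to solving the single equation $i + a\rngmlt i = b$ for $i\in I$.

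It then remains to solve $i + a\rngmlt i = b$ inside $I$, which I would do by the standard nilpotent inversion trick: rewriting the equation as $i = b - a\rngmlt i$ and iterating gives $i = b - a\rngmlt b + a\rngmlt a\rngmlt b - \cdots$, a sum that terminates because $N$ is nilpotent. Every summand lies in $I$ since $I$ is a (left) ideal, so $i\in I$ and the factorisation $n = a\brmlt i$ exists; this proves $N = S\brmlt I$. Uniqueness is then immediate: $s=a$ is forced, and $i + a\rngmlt i = b$ has at most one solution in $I$, since the difference $d$ of two solutions satisfies $d = -a\rngmlt d$, so repeated substitution writes $d$ as a $k$-fold $\rngmlt$-product of the form $\pm a\rngmlt\cdots\rngmlt a\rngmlt d$ for every $k$, whence $d=0$ by nilpotency. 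Equivalently, once $S$ and $I$ are known to be subgroups with $S\cap I=\{0\}$ and $N = S\brmlt I$, exactness follows from the elementary group-theoretic fact that $s_1\brmlt i_1 = s_2\brmlt i_2$ forces $s_2^{-1}\brmlt s_1 = i_2\brmlt i_1^{-1}\in S\cap I = \{0\}$. The main obstacle is the existence step, and within it the crux is the reduction forcing $s=a$ together with the nilpotent inversion producing $i$; the ideal hypothesis on $I$ is exactly what keeps every intermediate quantity inside $I$.
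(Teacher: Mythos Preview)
The paper does not actually prove this proposition: it is quoted as Proposition~2.8 of~\cite{agata-leandro} and simply used as a black box, so there is no in-paper argument to compare against. Your proof is correct and self-contained. The verification that $S$ and $I$ are $\brmlt$-subgroups via the nilpotent quasi-inverse series is standard, the key reduction $s\brmlt i = s + (i + s\rngmlt i)$ with $i + s\rngmlt i \in I$ correctly forces $s=a$ from the unique additive decomposition, and the nilpotent inversion $i = b - a\rngmlt b + a\rngmlt a\rngmlt b - \cdots$ (equivalently, left-multiplying $i + a\rngmlt i = b$ by the $\brmlt$-inverse of $a$ inside the adjoint group) legitimately produces the required $i\in I$. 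Uniqueness then follows either from your iteration argument on the difference or, more cleanly, from the group-theoretic observation $s_2^{-1}\brmlt s_1 = i_2\brmlt i_1^{-1}\in S\cap I=\{0\}$, which is valid once you have checked (as you did) that $S$ and $I$ are genuine subgroups of $(N,\brmlt)$.
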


%%%%%%%%%%%%%%%%%%%%%%%%%%%%%%%%%%%%%%%%%%%%%%%%%%%%%%%%%%%%%%%%%%%%%%%%%%%%%%%%%%%%%%%%%%%%%%%%%%%%%%%%%%%%%%%%%%%%%%%%%%%%%%%%%%%%%%%%%%%%%%%%%%%%%%%%%%%%%%%%%%%%%%%%%%%%%%%%%%%%%%%%%%%%%%%%%%%%%%%%%%%%%%%%%%%%%%%%%%%%%%%%

%%%%%%%%%%%%%%%%%%%%%%%%%%%%%%%%%%%%%%%%%%%%%%%%%%%%%%%%%%%%%%%%%%%%%%%%%%%%%%%%%%%%%%%%%%%%%%%%%%%%%%%%%%%%%%%%%%%%%%%%%%%%%%%%%%%%%%%%%%%%%%%%%%%%%%%%%%%%%%%%%%%%%%%%%%%%%%%%%%%%%%%%%%%%%%%%%%%%%%%%%%%%%%%%%%%%%%%%%%%%%%%%

%%%%%%%%%%%%%%%%%%%%%%%%%%%%%%%%%%%%%%%%%%%%%

We recall the following theorem from \cite{refl}:
\begin{thm}[Theorem 2.1, \cite{refl}] \label{refl-eq-thm-sols-equiv}
%%%%%%%% used in factorizable rings thm
Let $\abrace$ be a left brace, $r$ be the Yang-Baxter map of $\abrace$ and $X\subseteq B$ such that $(X,r)$ is a solution to the YBE. 

Let $Y\subseteq B$ and $g\colon X \to Y$ be a map. Assume there is a map $ \wedge : X \times Y \to X,$ $(x,y) \mapsto x\wedge y$ such that
\begin{equation} \label{fst-eq-thm-sols-equiv}
    \sigma_x(y \wedge g(z)) = \sigma_x(y) \wedge g(z),\ x,y,z \in X.
\end{equation}
Let $f\colon X \to X$ be a $\mathcal{G} (X,r)-$equivariant map and $k\colon X\to X$ be 
\begin{equation} \label{snd-eq-thm-sols-equiv}
    k(x) = f(x) \wedge g(x).
\end{equation}
Then $k$ is a reflection of $(X,r)$ if and only if
\begin{equation} \label{thrd-eq-thm-sols-equiv}
    f(x) \wedge g(\tau_{k(y)}(x)) = f(x) \wedge g(\tau_y(x)),\ x,y \in X.
\end{equation}
\end{thm}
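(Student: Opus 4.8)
The plan is to reduce the whole statement to a single-coordinate computation by invoking the criterion recalled in the remark following Example \ref{refl-eq-ex-mod-n} (Theorem 1.8 of \cite{refl}): since $r$ is the Yang-Baxter map of a brace, $(X,r)$ is involutive and nondegenerate (Theorem \ref{yb-brc-map}), and therefore $k$ is a reflection of $(X,r)$ if and only if the \emph{first} coordinates of the two sides of \eqref{refl-eq} agree on every $(a,b)\in X\times X$. This lets me avoid the much messier second coordinate, which carries an extra outer application of $k$ on only one side. Before computing I would record the involutivity identity obtained by applying $r^2=\id$ to $(x,y)$, namely $\sigma_{\sigma_x(y)}(\tau_y(x)) = x$ for all $x,y\in X$.

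Evaluating both composites on a general $(a,b)$ by the same four-step bookkeeping as in Example \ref{refl-eq-ex-invertible}, but tracking only the first slot, I expect the first coordinate of the right-hand side $(\id\times k)\,r\,(\id\times k)\,r$ to be $\sigma_{\sigma_a(b)}\big(k(\tau_b(a))\big)$, and the first coordinate of the left-hand side $r\,(\id\times k)\,r\,(\id\times k)$ to be $\sigma_{\sigma_a(k(b))}\big(k(\tau_{k(b)}(a))\big)$.

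I would then simplify each expression by the same three moves. First substitute $k=f\wedge g$, so that the right-hand first coordinate becomes $\sigma_{\sigma_a(b)}\big(f(\tau_b(a))\wedge g(\tau_b(a))\big)$. Next apply the compatibility hypothesis \eqref{fst-eq-thm-sols-equiv} with $x=\sigma_a(b)$, $y=f(\tau_b(a))$, $z=\tau_b(a)$ (all in $X$, since $r(X,X)\subseteq X$ and $f\colon X\to X$) to pull $\sigma_{\sigma_a(b)}$ onto the first $\wedge$-argument, giving $\sigma_{\sigma_a(b)}(f(\tau_b(a)))\wedge g(\tau_b(a))$. Then use that $f$ is \gequiv\ to commute $\sigma_{\sigma_a(b)}$ past $f$, and finally collapse $\sigma_{\sigma_a(b)}(\tau_b(a))=a$ via the involutivity identity. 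This yields $f(a)\wedge g(\tau_b(a))$. The identical argument with $b$ replaced by $k(b)$, using the involutivity identity in the form $\sigma_{\sigma_a(k(b))}(\tau_{k(b)}(a))=a$, gives $f(a)\wedge g(\tau_{k(b)}(a))$ for the left-hand side.

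Comparing, the first coordinates agree on all $(a,b)\in X\times X$ precisely when $f(a)\wedge g(\tau_{k(b)}(a)) = f(a)\wedge g(\tau_b(a))$ for all $a,b$, which is exactly \eqref{thrd-eq-thm-sols-equiv}; the criterion from \cite{refl} then settles both directions at once. I expect the only delicate point to be the bookkeeping that collapses both first coordinates to the common shape $f(a)\wedge g(\tau_\bullet(a))$ — in particular verifying at each step that the arguments of \eqref{fst-eq-thm-sols-equiv} and of the equivariance relation genuinely lie in $X$, and applying the involutivity identity $\sigma_{\sigma_a(y)}(\tau_y(a))=a$ correctly with the two choices $y=b$ and $y=k(b)$.
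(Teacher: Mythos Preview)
Your argument is correct. Note, however, that the paper does not actually prove this theorem: it is quoted verbatim as Theorem~2.1 of~\cite{refl} and used as a black box, so there is no in-paper proof to compare against. That said, your reduction to first coordinates via Theorem~1.8 of~\cite{refl}, followed by the three-step simplification (apply~\eqref{fst-eq-thm-sols-equiv}, commute $\sigma$ past $f$ by equivariance, collapse via $\sigma_{\sigma_a(y)}(\tau_y(a))=a$), is exactly the technique the paper itself deploys later in the proof of Proposition~\ref{fctr-thm-socle}, where the identical first-coordinate identity~\eqref{fctr-thm-scole:to-show} appears. So your approach is both sound and in the spirit of the surrounding material.
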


The following theorem is a consequence of theorem \ref{refl-eq-thm-sols-equiv} for nilpotent rings.
\begin{thm}\label{rfl-fctr-prp}
Let $(N,+,\rngmlt)$ be a nilpotent ring, assume $N= B\brmlt C$ for some $B,C \leq (N,\brmlt)$ and $\nbrace,\nfctr$ are the braces as above. Let $r$ be the Yang-Baxter map of $\nfctr$ and let $X \subseteq N$ be such that $(X,r)$ is a solution to the YBE.

Let $f\colon X\to X$ be a \gequiv\ map and
$g\colon X \to X$ be a function such that
\begin{equation} \label{fst-eq-prop}
f(x) \rngmlt g(\tau_{k(y)}(x)) = f(x) \rngmlt g ( \tau_y(x) ) 
\end{equation}
and
\begin{equation} \label{snd-eq-prop}
g(x) \rngmlt c = c \rngmlt g(x) 
\end{equation}
for all $x,y \in X,\ c \in C$. Then 
\[
k_1(x) = f(x) \rngmlt g(x),\ k_2(x) = f(x) + f(x) \rngmlt g(x)
\]
are reflections of $(X,r)$ assuming $k_1(X), k_2(X) \subseteq X$.
\end{thm}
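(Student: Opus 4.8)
The plan is to apply Theorem~\ref{refl-eq-thm-sols-equiv} twice, once for $k_1$ and once for $k_2$, taking the brace in that theorem to be the factorizable brace $\nfctr$, so that $r$, $\sigma$ and $\tau$ are those of $\fctrmlt$, and taking $g$ as given with $Y = g(X)$. For $k_1$ the natural choice of the operation $\wedge$ is the ring multiplication itself, $y\wedge w := y\rngmlt w$, since then $f(x)\wedge g(x) = f(x)\rngmlt g(x) = k_1(x)$, matching~\eqref{snd-eq-thm-sols-equiv}. For $k_2$ I would instead take $y\wedge w := y + y\rngmlt w$, because $f(x) + f(x)\rngmlt g(x) = k_2(x)$. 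In both cases $f(x)\wedge g(x) = k_i(x)\in X$ by the standing assumption $k_i(X)\subseteq X$.

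The key computation is to put $\sigma_x$ of the factorizable brace into a usable form. Writing $x = x_1\brmlt x_2$ with $x_1\in B$, $x_2\in C$ and using $a\brmlt b = a + b + a\rngmlt b$, I would expand
\[
\sigma_x(y) = x\fctrmlt y - x = x_1\brmlt y\brmlt x_2 - x_1\brmlt x_2 = y + x_1\rngmlt y + y\rngmlt x_2 + x_1\rngmlt y\rngmlt x_2 .
\]
Equivalently, passing to the unitalization this is the ``sandwich'' $\sigma_x(y) = (1+x_1)\rngmlt y\rngmlt(1+x_2)$, which makes the next step transparent. This is the heart of the argument and the step I expect to be the main obstacle, since everything downstream is driven by the precise shape of this expression, in particular by the fact that the right-hand factor $(1+x_2)$ involves only an element $x_2\in C$.

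With the sandwich form in hand, verifying the intertwining condition~\eqref{fst-eq-thm-sols-equiv} for $\wedge = \rngmlt$ is immediate:
\[
\sigma_x(y\rngmlt g(z)) = (1+x_1)\rngmlt y\rngmlt g(z)\rngmlt(1+x_2), \qquad \sigma_x(y)\rngmlt g(z) = (1+x_1)\rngmlt y\rngmlt(1+x_2)\rngmlt g(z),
\]
so the two agree precisely when $g(z)$ commutes with $1+x_2$, i.e.\ when $g(z)\rngmlt x_2 = x_2\rngmlt g(z)$; as $x_2\in C$ this is exactly hypothesis~\eqref{snd-eq-prop}. For the $k_2$-operation $y\wedge w = y + y\rngmlt w$ I would use that $\sigma_x$ is additive (Proposition~\ref{sigma-homo}) to write $\sigma_x(y\wedge g(z)) = \sigma_x(y) + \sigma_x(y\rngmlt g(z))$ and then reduce the second summand to the computation just done; hence~\eqref{fst-eq-thm-sols-equiv} holds for this $\wedge$ as well.

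Finally, Theorem~\ref{refl-eq-thm-sols-equiv} says $k_i$ is a reflection if and only if~\eqref{thrd-eq-thm-sols-equiv} holds. For $k_1$ this condition reads $f(x)\rngmlt g(\tau_{k_1(y)}(x)) = f(x)\rngmlt g(\tau_y(x))$, which is~\eqref{fst-eq-prop}. For $k_2$ it reads $f(x) + f(x)\rngmlt g(\tau_{k_2(y)}(x)) = f(x) + f(x)\rngmlt g(\tau_y(x))$, which after cancelling $f(x)$ is again~\eqref{fst-eq-prop} with the candidate $k = k_2$. Since $f$ is \gequiv\ by assumption, all hypotheses of Theorem~\ref{refl-eq-thm-sols-equiv} are met for each instantiation, so both $k_1$ and $k_2$ are reflections of $(X,r)$. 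The only remaining bookkeeping is to confirm that the instances of $\wedge$ appearing above take values in $X$, which I would handle using $k_i(X)\subseteq X$ together with $r(X,X)\subseteq X$.
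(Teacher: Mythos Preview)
Your proposal is correct and follows essentially the same route as the paper: both apply Theorem~\ref{refl-eq-thm-sols-equiv} with $\wedge=\rngmlt$ for $k_1$ and $\wedge=(y,w)\mapsto y+y\rngmlt w$ for $k_2$, reduce the $k_2$ intertwining to the $k_1$ one via additivity of $\sigma_x$, and then verify $\sigma_x(y\rngmlt g(z))=\sigma_x(y)\rngmlt g(z)$ by expanding $\sigma_x(w)=w+x_1\rngmlt w+w\rngmlt x_2+x_1\rngmlt w\rngmlt x_2$ and invoking~\eqref{snd-eq-prop}. Your sandwich formulation $(1+x_1)\rngmlt y\rngmlt(1+x_2)$ in the unitalization is a neat repackaging of exactly the same computation the paper writes out term by term.
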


\begin{proof}
To show $k_1$ is a reflection of $(X,r)$, set $\wedge = \rngmlt$ in theorem \ref{refl-eq-thm-sols-equiv} so that $k_1$ coincides with the reflection $k$ of theorem \ref{refl-eq-thm-sols-equiv}. 
Then equation \eqref{thrd-eq-thm-sols-equiv} of theorem \ref{refl-eq-thm-sols-equiv} is precisely equation \eqref{fst-eq-prop} of the proposition we want to prove. 
Hence it suffices to show equation \eqref{fst-eq-thm-sols-equiv} from theorem \ref{refl-eq-thm-sols-equiv} is satisfied. 
That is, for $k_1$ to be a reflection of $(X,r)$ it suffices to show
\begin{equation*}
    \sgm{x}{y\rngmlt g(z)} = \sgm{x}{y} \rngmlt g(z),\ x,y,z \in X, \label{prop-k1}
\end{equation*}
where $\sigma_x$ is such that $r(x,y) = (\sigma_x(y), \tau_y(x)),\ x,y \in X$.

For 
$k_2(x)= f(x) + f(x) \rngmlt g(x)$ 
we set 
$ x \wedge y = x + x \rngmlt y$
in theorem \ref{refl-eq-thm-sols-equiv}.
It is easy to see that equation \eqref{thrd-eq-thm-sols-equiv} of theorem \ref{refl-eq-thm-sols-equiv} is equivalent to \eqref{fst-eq-prop} in the proposition under consideration. Thus again it suffices to prove equation \eqref{fst-eq-thm-sols-equiv} of theorem \ref{refl-eq-thm-sols-equiv}.
For $k_2$ this reduces to the same equation as for $k_1$, as the following calculations show:
\begin{alignat*}{2}
\sgm{x}{y + y \rngmlt g(z)} &= \sgm{x}{y} + \sgm{x}{y} \rngmlt g(z) \Leftrightarrow \\
\sgm{x}{y} + \sgm{x}{y \rngmlt g(z)}  &= \sgm{x}{y} + \sgm{x}{y} \rngmlt g(z) 
\Leftrightarrow \\
\sgm{x}{y \rngmlt g(z)} &=  \sgm{x}{y} \rngmlt g(z),
\end{alignat*}
where we used that $\sigma_x$ is a $(X,+)$ homomorphism by \ref{sigma-homo}. 

We now prove that 
$
\sgm{x}{y \rngmlt g(z)} =  \sgm{x}{y} \rngmlt g(z),\ x,y,z \in X.
$
Let $x = b \brmlt c,\ b \in B,\ c \in C$. For any $w\in X$ we have
\begin{alignat*}{2}
\sgm{x}{w} &= x \fctrmlt w - x \\
           &= b \brmlt w \brmlt c - b \brmlt c \\
           &= b + c + w + b\rngmlt w + b \rngmlt c + w \rngmlt c + b \rngmlt w \rngmlt c - b - c - b\rngmlt c \\
           &= w + b \rngmlt w + w \rngmlt c + b \rngmlt w \rngmlt c.
\end{alignat*}

For $ w = y \rngmlt g(z) $, using that \(c \rngmlt g(z) = g(z) \rngmlt c \) by \eqref{snd-eq-prop}, the above yields
\begin{alignat*}{2}
\sgm{x}{y \rngmlt g(z)} &= y \rngmlt g(z) + b \rngmlt (y \rngmlt g(z)) + 
                          (y \rngmlt g(z)) \rngmlt c + b \rngmlt (y \rngmlt g(z)) \rngmlt c \\
                        &=  y \rngmlt g(z) + b \rngmlt y \rngmlt g(z) + 
                          y \rngmlt c \rngmlt g(z) + b \rngmlt y \rngmlt c \rngmlt g(z) \\
                        &= (y  + b \rngmlt y  + 
                          y \rngmlt c  + b \rngmlt y \rngmlt c )\rngmlt g(z) \\
                        &= \sgm{x}{y} \rngmlt g(z),
\end{alignat*}
where in the last equality we used the calculation for $\sigma_x(w)$ above.
\end{proof}

%Using theorem \ref{rfl-fctr-prp} and lemma \ref{refl-lemma-factorised} we can adapt corollary 2.8 of \cite{refl} as follows:
We can recast assumption~\eqref{fst-eq-prop} in terms of the ideals of $\nring$, leading to the following theorem:
\begin{thm} \label{refl-thm-fctr-1}
Let $(N,+,\rngmlt)$ be a nilpotent ring with a subring $S$ and a two-sided ideal $I$ such that $S \cap I = 0,\ N = S + I$. 
Let $(N,+,\brmlt)$ be as above. Then $N=S \brmlt I$ and let $(N,+,\fctrmlt)$ be the brace as in~\ref{fctr-brc} with Yang-Baxter map $r$. 
Let $X \subseteq N$ be such that $(X,r)$ is a solution to the YBE.

Let $J\subseteq I \cap X$ be a two-sided ideal of the ring $\nring$. 
Let $f,g: X \to X$ such that $f$ is \gequiv.
Let $k_1(x) = f(x) \rngmlt g(x)$ and $k_2(x) = f(x) +f(x) \rngmlt g(x)$ and assume $k_1(X), k_2(X)\subseteq X$. 

If $g$ satisfies
\begin{equation} \label{ref-thm-fctr:assm1}
    g(x) \ast z  = z \ast g(x),\ z\in I,\ x \in X 
    %for invoking prop above
\end{equation} 
and
\begin{equation} \label{ref-thm-fctr:assm2}
    g(x + J) = g(x),\ x\in X, %very end to show g(x^{k(y)} = g(x^y)
\end{equation}
and moreover $k_1(x) - x, k_2(x) -x \in J$ for $x\in X$, then $k_1, k_2$ are reflections of $(X,r)$.
\end{thm}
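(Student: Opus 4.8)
The plan is to deduce this from Theorem~\ref{rfl-fctr-prp} by taking $B = S$ and $C = I$. First I would check that this choice is legitimate: since $S$ is a subring and $I$ a two-sided ideal of the nilpotent ring $\nring$, both are closed under the adjoint multiplication $\brmlt$ and under $\brmlt$-inverses, so $S, I \leq (N,\brmlt)$, and $N = S\brmlt I$ by Proposition~\ref{nilp-rng-fctr}. Thus $\nfctr$ is precisely the factorizable brace of Proposition~\ref{fctr-brc} for this $B,C$, and Theorem~\ref{rfl-fctr-prp} is applicable. With $C = I$, hypothesis~\eqref{ref-thm-fctr:assm1} is verbatim the commutativity condition~\eqref{snd-eq-prop}, and $f$ is \gequiv\ by assumption. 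So everything reduces to verifying the remaining hypothesis~\eqref{fst-eq-prop} of Theorem~\ref{rfl-fctr-prp}, namely $f(x)\rngmlt g(\tau_{k(y)}(x)) = f(x)\rngmlt g(\tau_y(x))$ for $x,y\in X$, for both $k = k_1$ and $k = k_2$.

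For this it suffices to prove $g(\tau_{k(y)}(x)) = g(\tau_y(x))$, and by the $J$-invariance~\eqref{ref-thm-fctr:assm2} of $g$ this follows once I establish
\[
\tau_{k(y)}(x) - \tau_y(x) \in J, \qquad x,y \in X,
\]
for $k\in\{k_1,k_2\}$. Here $\tau_y(x),\tau_{k(y)}(x)\in X$ because $r(X,X)\subseteq X$ and $k(X)\subseteq X$, so~\eqref{ref-thm-fctr:assm2} does apply. Since $k_1(y)-y\in J$ and $k_2(y)-y\in J$ by hypothesis, the real content of the claim is that the assignment $y\mapsto \tau_y(x)$ respects congruence modulo $J$.

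The key step is a reduction modulo $J$. Because $J$ is a two-sided ideal of $\nring$ contained in $I$, with $S\cap J \subseteq S\cap I = 0$, the quotient ring $\overbar{N} = N/J$ again factorizes: writing $\overbar{S} = (S+J)/J$ and $\overbar{I} = I/J$, one has $\overbar{N} = \overbar{S} + \overbar{I}$ with $\overbar{S}\cap\overbar{I} = 0$, so by Proposition~\ref{nilp-rng-fctr} the adjoint group factorizes exactly as $\overbar{N} = \overbar{S}\brmlt\overbar{I}$, and Proposition~\ref{fctr-brc} equips $\overbar{N}$ with a factorizable brace $(\overbar{N},+,\fctrmlt)$. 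I then verify that the quotient map $\pi\colon N \to \overbar{N}$ is a homomorphism of these factorizable braces: it is a homomorphism for $\brmlt$ and $+$ since $J$ is a ring ideal, and because it sends the $S\brmlt I$-decomposition $x = x_1\brmlt x_2$ to the (by exactness, unique) $\overbar{S}\brmlt\overbar{I}$-decomposition $\pi(x) = \pi(x_1)\brmlt\pi(x_2)$, it satisfies $\pi(x\fctrmlt y) = \pi(x)\fctrmlt\pi(y)$. Consequently $\pi$ intertwines the Yang-Baxter maps, so that $\pi(\tau_y(x)) = \tau_{\pi(y)}(\pi(x))$ in the quotient brace. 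Since $\pi(k(y)) = \pi(y)$ (because $k(y)-y\in J = \ker\pi$), we get $\pi(\tau_{k(y)}(x)) = \tau_{\pi(y)}(\pi(x)) = \pi(\tau_y(x))$, i.e.\ $\tau_{k(y)}(x) - \tau_y(x)\in J$, as needed. Feeding this back yields~\eqref{fst-eq-prop} for both $k_1$ and $k_2$, and Theorem~\ref{rfl-fctr-prp} then gives that $k_1,k_2$ are reflections of $(X,r)$.

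I expect the main obstacle to be this descent step: one must check carefully that the factorizable brace structure, which is defined through a \emph{choice} of decomposition rather than by a formula in the ring operations, genuinely passes to $N/J$ — concretely, that the induced factorization is still exact and that $\pi$ respects decompositions, so that the quotient $\fctrmlt$ is well defined and $\pi$ is a brace homomorphism intertwining the Yang-Baxter maps. The remaining points (matching~\eqref{ref-thm-fctr:assm1} with~\eqref{snd-eq-prop}, and reducing~\eqref{fst-eq-prop} to membership in $J$ via~\eqref{ref-thm-fctr:assm2}) are routine.
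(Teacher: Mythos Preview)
Your reduction to Theorem~\ref{rfl-fctr-prp} and the identification of \eqref{ref-thm-fctr:assm1} with \eqref{snd-eq-prop} match the paper exactly, and the paper likewise reduces \eqref{fst-eq-prop} to the statement $\tau_{k(y)}(x)-\tau_y(x)\in J$ via the $J$-invariance \eqref{ref-thm-fctr:assm2}. The genuine difference is in how that membership is established. The paper isolates this as a separate technical lemma (Lemma~\ref{refl-lemma-factorised}) and proves it by a long explicit computation: it writes out $\sigma_x(y)$ and $\sigma_x(k(y))$ in terms of the ring operations, compares their $S\brmlt I$-decompositions to extract $s=s'$ and $i-i'\in J$, pushes this through the nilpotent inverse formula to get $i^{-1}-i'^{-1}\in J$, and finally expands $\tau_y(x)-\tau_{k(y)}(x)$ in terms of $\rngmlt$ to see that every surviving term lies in $J$. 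Your argument replaces all of this with the observation that $\overbar{N}=N/J$ is again a factorizable nilpotent ring with the same exactness properties, so that the quotient map is a $\fctrmlt$-brace homomorphism and hence intertwines $\tau$; the desired congruence drops out. This is correct (the checks that $\overbar S\cap\overbar I=0$ and that $\pi$ carries the unique $S\brmlt I$-decomposition to the unique $\overbar S\brmlt\overbar I$-decomposition go through as you indicate), and it is noticeably shorter and more conceptual than the paper's route. What the paper's computation buys is that it never leaves $N$ and makes no appeal to well-definedness of $\fctrmlt$ on a quotient; what your approach buys is a one-line proof of the key fact once the descent is set up, and a statement (``$\pi$ is a brace homomorphism for $\fctrmlt$'') that is reusable elsewhere.
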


To prove~\ref{refl-thm-fctr-1} we need the following technical lemma:
\begin{lemma} \label{refl-lemma-factorised}
Let $(N,+,\rngmlt)$ be a nilpotent ring with a subring $S$ and a two-sided ideal $I$ such that $S \cap I = 0,\ N = S + I$. Let $(N,+,\brmlt)$ be as above. Then $N=S \brmlt I$ and let $(N,+,\fctrmlt)$ be the brace as in~\ref{fctr-brc} with Yang-Baxter map $r$. 
Let $X \subseteq N$ be such that $(X,r)$ is a solution to the YBE.

Assume $J\subseteq I \cap X$ is a two-sided ideal of the ring $\nring$ and $k: X \to X$ is such that 
\[
k(x) - x \in J,\ x\in X.
\]
Then
\[
\tau_{k(y)} - \tau_y(x) \in J,\ x,y\in X.
\]
\end{lemma}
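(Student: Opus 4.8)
The plan is to prove the statement by passing to the quotient ring $N/J$ and showing that the factorized brace structure descends to it, so that the canonical projection becomes a brace homomorphism. Write $\pi\colon N \to N/J$ for this projection. Since $J$ is a two-sided ideal of $\nring$, the quotient $(N/J,+,\rngmlt)$ is again a nilpotent ring and $\pi$ is a ring homomorphism, hence also a homomorphism of the adjoint groups $(N,\brmlt)\to(N/J,\brmlt)$. If I can arrange the factorized multiplication $\fctrmlt$ to descend to a factorized multiplication $\bar\fctrmlt$ on $N/J$ making $\pi$ a brace homomorphism $(N,+,\fctrmlt)\to(N/J,+,\bar\fctrmlt)$, then the conclusion is immediate: a brace homomorphism intertwines the associated Yang-Baxter maps, and because $k(y)-y\in J$ we have $\pi(k(y))=\pi(y)$, whence
\[
\pi\big(\tauu{k(y)}{x}\big) = \bar\tau_{\pi(k(y))}(\pi(x)) = \bar\tau_{\pi(y)}(\pi(x)) = \pi\big(\tauu{y}{x}\big),
\]
so that $\tauu{k(y)}{x}-\tauu{y}{x}\in\ker\pi=J$, as required.

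To carry out the descent I would first check that the exact factorization survives in the quotient. Set $\bar S := (S+J)/J$ and $\bar I := I/J$. As $J\subseteq I$ is a two-sided ideal, $\bar I$ is a two-sided ideal of $N/J$, the set $S+J$ is a subring (being the sum of the subring $S$ and the ideal $J$) so $\bar S$ is a subring, and clearly $N/J=\bar S+\bar I$. The crucial point is that $\bar S\cap\bar I=0$: if $s+J=i+J$ with $s\in S$, $i\in I$, then $s-i\in J\subseteq I$ forces $s\in I$, so $s\in S\cap I=0$ and the common class vanishes. By Proposition~\ref{nilp-rng-fctr} the adjoint group of $N/J$ then factorizes exactly as $N/J=\bar S\brmlt\bar I$, and Proposition~\ref{fctr-brc} endows $N/J$ with the factorized brace $(N/J,+,\bar\fctrmlt)$.

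Next I would verify that $\pi$ is multiplicative for $\fctrmlt$ and $\bar\fctrmlt$; additivity is clear. Take $x\in N$ with its exact factorization $x=x_1\brmlt x_2$, $x_1\in S$, $x_2\in I$. Applying the adjoint-group homomorphism $\pi$ gives $\pi(x)=\pi(x_1)\brmlt\pi(x_2)$ with $\pi(x_1)\in\bar S$ and $\pi(x_2)\in\bar I$; by the uniqueness of the factorization $N/J=\bar S\brmlt\bar I$ this is precisely the factorization of $\pi(x)$. Hence for any $y\in N$,
\[
\pi(x)\,\bar\fctrmlt\,\pi(y) = \pi(x_1)\brmlt\pi(y)\brmlt\pi(x_2) = \pi(x_1\brmlt y\brmlt x_2) = \pi(x\fctrmlt y),
\]
so $\pi$ is a brace homomorphism. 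Because the maps $\sgm{x}{\cdot}$ and $\tauu{\cdot}{x}$ are built from $+$, $\fctrmlt$ and $\fctrmlt$-inversion alone (Theorem~\ref{yb-brc-map}), a brace homomorphism automatically satisfies $\pi(\sgm{x}{y})=\bar\sigma_{\pi(x)}(\pi(y))$ and $\pi(\tauu{y}{x})=\bar\tau_{\pi(y)}(\pi(x))$; here it is essential that $\pi$ is a homomorphism of the multiplicative groups, so that it respects the $\fctrmlt$-inverse appearing in $\tauu{y}{x}=(\sgm{x}{y})^{-1}\fctrmlt x-(\sgm{x}{y})^{-1}$.

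I expect the main obstacle to be exactly this last point—ensuring $\pi$ respects $\fctrmlt$-inversion, which is what forces me to establish the full brace-homomorphism property rather than merely tracking $\tau$-congruences by hand. A direct computation is possible in principle: using the formula $\sgm{x}{w}=w+b\rngmlt w+w\rngmlt c+b\rngmlt w\rngmlt c$ (with $x=b\brmlt c$) from the proof of Theorem~\ref{rfl-fctr-prp} together with $J$ being two-sided, one checks readily that $\sgm{x}{k(y)}-\sgm{x}{y}\in J$; but propagating this congruence through the $\fctrmlt$-inverse in the formula for $\tau$ is awkward directly, and the quotient-brace argument is precisely what makes it clean. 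I would also note that the hypothesis that $(X,r)$ is a YBE solution is not actually used here—the statement is purely brace-theoretic, with $x,y\in X\subseteq N$ and $k(y)\in X\subseteq N$ ensuring all quantities live in $N$.
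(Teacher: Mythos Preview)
Your proof is correct and takes a genuinely different route from the paper. The paper proceeds by direct computation: it writes $\sigma_x(y)=s\brmlt i$ and $\sigma_x(k(y))=s'\brmlt i'$ in the factorization $N=S\brmlt I$, computes explicitly that $\sigma_x(y)-\sigma_x(k(y))\in J$, then argues (using $J\subseteq I$ and $S\cap I=0$) that $s=s'$ and $i-i'\in J$; from the latter it deduces $i^{-1}-i'^{-1}\in J$ via the geometric-series formula for $\brmlt$-inverses in a nilpotent ring; finally it invokes $\sigma_x(y)\fctrmlt\tau_y(x)=x\fctrmlt y$ to solve for the two $\tau$'s and checks, through a long term-by-term expansion in the ring operations, that their difference lies in $J$. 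Your quotient argument packages all of this into the single structural observation that the factorized brace descends along $\pi\colon N\to N/J$, so that $\pi$ intertwines the Yang--Baxter maps and the conclusion is immediate from $\pi(k(y))=\pi(y)$. This is shorter and more conceptual, and it makes transparent \emph{why} the result holds: the ideal $J$ is compatible with the entire factorized-brace structure, not merely with isolated identities. The paper's hands-on approach, by contrast, avoids setting up the quotient brace at the cost of the lengthy calculations you anticipated around the $\fctrmlt$-inverse. Your closing remark that the YBE hypothesis on $(X,r)$ plays no role is also correct.
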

\begin{proof}
Firstly notice that it follows immediately that $N= S\brmlt I$ from \ref{nilp-rng-fctr}, so $(N,+,\fctrmlt)$ is indeed a brace by \ref{fctr-brc}.

Write $r(x,y) = (\sigma_x(y), \tau_y(x)),\ x,y \in N$ for the Yang-Baxter map associated to $(N,+,\fctrmlt).$

The equation for $\tau$ involves a multiplicative inverse, which makes it hard to work with. So instead we will exploit the structure of the factorised ring and the properties of the ideals as much as we can using $\sigma$ and then we will relate this information to $\tau$ with \eqref{brc-sigma-tau} which says that
\[
    \sigma_x(y) \fctrmlt \tau_y(x) = x \fctrmlt y,\ x,y\in X.
\]

Let $x,y\in X$ be arbitrary and assume $\sgm{x}{y} = s \brmlt i,\ \sgm{x}{k(y)} = s\prm \brmlt i \prm$ and $x = s \dprm \brmlt i \dprm$ for $s,s\prm, s\dprm \in S$ and $i, i\prm, i\dprm \in I$. 
We expand $\sigma_x(y),\ \sigma_x(k(y))$:
\newcommand{\sdp}{s \dprm}
\newcommand{\idp}{i \dprm}
\begin{alignat*}{2}
\sgm{x}{y} &= x \fctrmlt y - x\\
           &= \sdp \brmlt y \brmlt \idp - \sdp \brmlt \idp \\
           &= \sdp + y + \idp + \sdp \rngmlt y + \sdp \rngmlt \idp + y \rngmlt \idp 
              + \sdp \rngmlt y \rngmlt \idp - \sdp -\idp -\sdp \rngmlt \idp\\
           &= y + \sdp \rngmlt y + y \rngmlt \idp + \sdp \rngmlt y \rngmlt \idp.
\end{alignat*}
Similarly
\[
\sgm{x}{k(y)} = k(y) + \sdp \rngmlt k(y) + k(y) \rngmlt \idp + \sdp \rngmlt k(y) \rngmlt \idp.
\]
\newcommand{\spr}{s \prm}
\newcommand{\ipr}{i \prm}
Hence
\begin{alignat*}{2}
s \brmlt i - \spr \brmlt \ipr &= \sgm{x}{y} - \sgm{x}{k(y)}
                              \\
                              &= (y-k(y)) + \sdp \rngmlt ( y - k(y) ) 
                              \\
                            &{\ \ } + ( y - k(y) ) \rngmlt \idp + \sdp \rngmlt ( y - k(y) ) \rngmlt \idp,
\end{alignat*}
which implies \(s \brmlt i - \spr \brmlt \ipr \in J \) since \( y - k(y) \in J \) and $J$ is a two-sided ideal of the ring $J$. Moreover, notice that
\begin{alignat*}{2}
s \brmlt i - \spr \brmlt \ipr 
&=
\underbrace{(s - \spr)}_{\in S} + 
\underbrace{(i- \ipr + s \rngmlt i - \spr \rngmlt \ipr)}_{ \in I}\\
&\in J \subseteq I
\end{alignat*}
whence $s=\spr$, because the additive group of $N$ factorises as $N= S+I$. So $\sgm{x}{k(y)} = s \brmlt \ipr$. We also obtain 
\(
(i - \ipr + s \rngmlt i - s \rngmlt \ipr) \in J.
\)

% as per \ref{ring-id}
If we embed $N$ into a ring $N^1$ with multiplicative identity $1_{\rngmlt}$ the latter gives
\begin{alignat*}{2}
J &\ni i - \ipr + s \rngmlt i - s \rngmlt \ipr \\
  &= (1_{\rngmlt} + s) \rngmlt ( i - \ipr ).
\end{alignat*}
%By \ref{nilp-rng-inverse}
But $s+1_{\rngmlt}$ has a $\rngmlt$-inverse in $N^1$ of the form 
$\alpha +1_{\rngmlt},\ \alpha \in N$; 
multiplying by its inverse yields
\(
( i - \ipr ) \in J,
\)\ 
since if 
\[
(1_{\rngmlt} + s) \rngmlt ( i - \ipr ) = j \in J
\]
then
\[
(\alpha+1_{\rngmlt}) \rngmlt j = \alpha \rngmlt j + j \in J.
\]
We claim this further implies
\(
i^{-1} - i^{\prime - 1} \in J
\)
, where the exponent \(-1\) denotes the inverse with respect to \( \brmlt \), not $\fctrmlt$. To see this, notice that because $N$ is a nilpotent ring, for $r\in N$ we have
%\todo{Include proof of formula for adjoint inverse in nilpotent rings? Perhaps in appendix?}
\[
r^{-1} = \sum_{n=1}^{\infty} (-1)^{n} r^{n}.
\]
%by proposition \ref{nilp-rng-inverse}. 
Hence
\begin{alignat*}{2}
i^{-1} - i^{\prime - 1} &= \sum_{n=1}^{\infty} (-1)^n\ (i^n - i^{\prime\ n})\\
                        &= \sum_{n=1}^{\infty} (-1)^n\ (i-i^{\prime}) \rngmlt 
                            \Big[\   \sum_{k=0}^{n-1} \ i^k \rngmlt i^{\prime\ n-1-k}    \ \Big] \\
                        &\in J,\text{ since } (i-i^{\prime}) \in J,
\end{alignat*}
which proves our claim.

Therefore, we have shown that 
$s=s\prm$ 
and 
$i^{-1} - i^{\prime - 1} \in J$.
We will use these two statements to finish the proof.

Since 
$(N,+,\fctrmlt)$ 
is a brace, we know from proposition~\ref{sigma-homo} that
\[
\sgm{x}{y} \fctrmlt \tauu{y}{x} = x \fctrmlt y,\ x,y \in X
\]
hence
\[
s \brmlt \tauu{y}{x} \brmlt i = \sdp \brmlt y \brmlt \idp 
\]
which implies 
\[
\tauu{y}{x} = s^{-1} \brmlt \sdp \brmlt y \brmlt \idp \brmlt i^{-1}.
\]
Similarly
\[
\tauu{k(y)}{x} = s^{-1} \brmlt \sdp \brmlt k(y) \brmlt \idp \brmlt i^{\prime -1},
\]
where we used that $s\prm = s$.

Let
\[
a=s^{-1} \brmlt \sdp,\ w = y,\ b= \idp,\ c=i^{-1}
\]
\newcommand{\jp}{j\prm}
and let
\(
j, \jp \in J
\)
be such that
\[
k(y) = j+y \text{\ \ and\ \ } i^{\prime\ -1} = i^{-1} + j\prm.
\]
The existence of $j$ is guaranteed by the assumption $k(y) -y \in J$ and the existence of $j\prm$ by $i^{-1} - i^{\prime - 1} \in J$ which was proved above.

Then 
\begin{alignat*}{2}
\tauu{y}{x} - \tauu{k(y)}{x} &= 
           s^{-1} \brmlt \sdp \brmlt y \brmlt \idp \brmlt i^{-1} 
           - s^{-1} \brmlt \sdp \brmlt k(y) \brmlt \idp \brmlt i^{\prime -1} \\
           &= a\brmlt w \brmlt b \brmlt c - a \brmlt (w + j ) \brmlt b \brmlt (c+\jp)
\end{alignat*}
and
\(
\tauu{y}{x} - \tauu{k(y)}{x}  \in J
\)
follows directly from the next claim:
\begin{claim*}
Let $(R,+,\rngmlt)$ be a Jacobson radical ring and let $(R,+,\brmlt)$ be the brace with the adjoint multiplication. Let $J$ be a two-sided ideal of $R$. Moreover let $a,w,b,c \in R $ and $j,\jp \in J$ be arbitrary. Then
\[
a\brmlt w \brmlt b \brmlt c - a \brmlt (w + j ) \brmlt b \brmlt (c+\jp) \in J.
\]
\end{claim*}

\begin{proof}
We have
\begin{alignat*}{3}
&a\brmlt w \brmlt b \brmlt c - a \brmlt (w + j ) \brmlt b \brmlt (c+\jp)\\
=\
&a \brmlt \underbrace{ \big(  w \brmlt b \brmlt c - (w+j)\brmlt b \brmlt (c+\jp) \big)}_{=A} - a &&{\text{\ \ \  by \ref{cedo-lemma}}}
\\
=\
&a + A + a \rngmlt A - a %&&{\text{\ \ \  by \ref{ring-adjoint}}}
\\
=\
& A + a \rngmlt A.
\end{alignat*}

Thus it suffices to prove 
\(
A \in J.
\)
We have
\begin{alignat*}{3}
A &= w \brmlt b \brmlt c - (w+j)\brmlt b \brmlt (c+\jp) 
  \\
  &= w \brmlt b \brmlt c - [w\brmlt b \brmlt (c+\jp) + j \brmlt b \brmlt (c+\jp) - b \brmlt (c+\jp)]y
  {\text{\ \ \ by \eqref{right-brc-dstr}}}&&
  \\
  &= w \brmlt b \brmlt c  - \big(w\brmlt b \brmlt c + w \brmlt b \brmlt \jp - w\brmlt b 
  \\
  &{\ \ \ \ \ \ \ \ \ \ \ \ \ \ \ \ \ \ \ } 
  +j \brmlt b \brmlt c + j \brmlt b \brmlt \jp - j \brmlt b\\
  &{\ \ \ \ \ \ \ \ \ \ \ \ \ \ \ \ \ \ \ }  
  -b \brmlt c - b\brmlt \jp + b     \big) 
  {\text{\ \ \ by \eqref{dstr}}}&&
  \\
  &= - w \brmlt b \brmlt \jp + w\brmlt b -  j \brmlt b \brmlt c - j \brmlt b \brmlt \jp + j \brmlt b + b \brmlt c + b\brmlt \jp - b.
\end{alignat*}
We expand each term in terms of \(\rngmlt\). The terms with the same colour cancel out and the remaining ones are in \(J\):

\begin{alignat*}{2}
-\ w \brmlt b \brmlt \jp &= \mathcolor{blue}{-w} 
                             \mathcolor{magenta}{-b} 
                             -\jp 
                             \mathcolor{red}{-w \rngmlt b} 
                             - w\rngmlt \jp 
                             - w\rngmlt b \rngmlt \jp\\
+\  w\brmlt b &=  \mathcolor{blue}{w} 
                   + \mathcolor{magenta}{b} 
                   + \mathcolor{red}{w\rngmlt b}\\
-\  j \brmlt b \brmlt c &= -j 
                            \mathcolor{magenta}{-b}
                            \mathcolor{green}{-c}
                            -j\rngmlt b
                            -j \rngmlt c
                            \mathcolor{cyan}{-b \rngmlt c}
                            - j \rngmlt b \rngmlt c\\
-\ j \brmlt b \brmlt \jp &= -j 
                            \mathcolor{magenta}{-b}
                            -\jp
                            -j\rngmlt b
                            -j \rngmlt \jp 
                            - b \rngmlt \jp 
                            -j \rngmlt b \rngmlt \jp\\
 +\ j \brmlt b &= j
                  \mathcolor{magenta}{+b}
                  +j \rngmlt b\\
 +\ b \brmlt c &= \mathcolor{magenta}{b}
                   \mathcolor{green}{+c}
                   \mathcolor{cyan}{+b\rngmlt c}\\
 +\ b\brmlt \jp &= \mathcolor{magenta}{b}
                    + \jp 
                    + b \brmlt \jp\\
 -\ b &= \mathcolor{magenta}{-b}
\end{alignat*}
Since the only remaining terms are in \(J\), \(A \in J\), as desired.
\end{proof}

Nilpotent rings are Jacobson radical, hence the above claim applies. Thus \(
\tauu{y}{x} - \tauu{k(y)}{x} \in J,
\)
as desired.
\end{proof}

\begin{proof}[Proof of theorem~\ref{refl-thm-fctr-1}]
We will apply theorem~\ref{rfl-fctr-prp}. Due to assumption~\eqref{ref-thm-fctr:assm1}, it suffices to show~\eqref{fst-eq-prop}, \ie\
\[
f(x) \rngmlt g(\tau_{k_1(y)}(x)) = f(x) \rngmlt g ( \tau_y(x) ).
\]

Since $k_1(x)-x \in J$, by lemma~\ref{refl-lemma-factorised} we have $\tau_{k_1(y)}(x) - \tau_y(x) \in J$.
Then from assumption~\eqref{ref-thm-fctr:assm1} we deduce that
$g(\tau_{k_1(y)}(x))  = g ( \tau_y(x) )$.
Then~\eqref{fst-eq-prop} follows.

The same argument works also for $k_2$.
\end{proof}

\begin{comment}

\begin{cor}
Let $(N,+,\rngmlt)$ be a nilpotent ring with a subring $S$ and a two-sided ideal $I$ such that $S \cap I = 0,\ N = S + I$. 
Let $(N,+,\brmlt)$ be as above. Then $N=S \brmlt I$ and let $(N,+,\fctrmlt)$ be the brace as in~\ref{fctr-brc} with Yang-Baxter map $r$. 
Let $X \subseteq N$ be such that $(X,r)$ is a solution to the YBE.

Let $J\subseteq I \cap X$ is a two-sided ideal of the ring $\nring$.
If $g$ satisfies
\begin{equation} \label{ref-thm-fctr:assm1}
    g(x) \ast z  = z \ast g(x),\ z\in I,\ x \in X 
    %for invoking prop above
\end{equation} 
and
\begin{equation} \label{ref-thm-fctr:assm2}
    g(x + J) = g(x),\ x\in X, %very end to show g(x^{k(y)} = g(x^y)
\end{equation}
and moreover $k_1(x) - x, k_2(x) -x \in J$ for $x\in X$, then $k_1, k_2$ are reflections of $(X,r)$.
\end{cor}

\end{comment}

%Then the map $k:X\to X$ defined by $k(x) = x + x \rngmlt g(x)$ and $r$ satisfy the reflection equation. If we replace assumption 3 with
%If we replace Assumption 3 with $x \rngmlt g(x) - x \in J,\ x\in X$, then the map
%\[
%k(x) = x \rngmlt g(x)
%\]
%is a reflection of $(X,r)$, assuming $k(X) \subseteq X.$
%\end{thm}
%\begin{rmk*}
%I have replaced the assumption $g(x) - x \in J,\ x \in X$ with assumption 3 %for each \(k\). They are used to prove $k(x) - x \in J$.
%\end{rmk*}

By avoiding using theorem \ref{rfl-fctr-prp}, we can prove a more general theorem for factorizable rings using directly the assumptions of \ref{refl-eq-thm-sols-equiv} and adapting some so that lemma \ref{refl-lemma-factorised} applies:

\begin{thm} \label{refl-thm-fctr-general}
Let $(N,+,\rngmlt)$ be a nilpotent ring with a subring $S$ and a two-sided ideal $I$ such that $S \cap I = 0,\ N = S + I$. Let $(N,+,\brmlt)$ be as above. Then $N=S \brmlt I$ and let $(N,+,\fctrmlt)$ be the brace as in~\ref{fctr-brc} with Yang-Baxter map $r$. 
Let $X \subseteq N$ be such that $(X,r)$ is a solution to the YBE.

Assume $J\subseteq I \cap X$ is a two-sided ideal of the ring $\nring$. 
Let $f,g: X \to X$ and $\wedge : X \times X \to X $ be maps. 
Let $k(x) = f(x) \wedge g(x)$. If all of the following are true
\begin{itemize}
    \item $f$ is \gequiv
    \item $\sigma_x(y \wedge g(z)) = \sigma_x(y) \wedge g(z)$
    \item $g(x+J) = g(x),\ x \in X$
    \item $k(x) - x \in J,\ x\in X$
\end{itemize}
Then $k(x) = f(x) \wedge g(x)$ is a reflection of $(X,r)$.
\end{thm}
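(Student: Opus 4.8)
The plan is to apply Theorem~\ref{refl-eq-thm-sols-equiv} directly, taking $Y = X$ so that $g\colon X \to X$ and $\wedge\colon X\times X \to X$ match the hypotheses of that theorem. The four assumptions of the present statement are arranged precisely so that three of them reproduce the setup there: $f$ is \gequiv; the second bullet is exactly the compatibility condition~\eqref{fst-eq-thm-sols-equiv}; and $k$ already has the required form $k(x) = f(x)\wedge g(x)$ from~\eqref{snd-eq-thm-sols-equiv}. Consequently Theorem~\ref{refl-eq-thm-sols-equiv} applies and reduces the problem to verifying its reflection criterion~\eqref{thrd-eq-thm-sols-equiv}, namely
\[
f(x)\wedge g(\tau_{k(y)}(x)) = f(x)\wedge g(\tau_y(x)),\ x,y\in X.
\]

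To establish this identity I would invoke Lemma~\ref{refl-lemma-factorised}. Its hypotheses are met: $(N,+,\fctrmlt)$ is the factorised brace built from the same $S$, $I$, and $J\subseteq I\cap X$, and the fourth bullet $k(x) - x\in J$ is exactly the condition the lemma imposes on $k$. The lemma therefore yields $\tau_{k(y)}(x) - \tau_y(x)\in J$ for all $x,y\in X$; that is, $\tau_{k(y)}(x)$ and $\tau_y(x)$ lie in the same coset of $J$.

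The final step uses the third bullet, the $J$-invariance $g(x+J) = g(x)$. Writing $\tau_{k(y)}(x) = \tau_y(x) + j$ with $j\in J$, this invariance gives $g(\tau_{k(y)}(x)) = g(\tau_y(x))$, so both sides of~\eqref{thrd-eq-thm-sols-equiv} coincide and the criterion holds identically. By Theorem~\ref{refl-eq-thm-sols-equiv}, $k$ is then a reflection of $(X,r)$.

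I expect no serious obstacle at this stage, since the substantive work has already been carried out in Lemma~\ref{refl-lemma-factorised}, whose proof exploits the factorisation $N = S\brmlt I$ and the Jacobson-radical inversion formula. The only points needing care are bookkeeping ones: confirming that taking $Y = X$ legitimately specialises Theorem~\ref{refl-eq-thm-sols-equiv}, and noting that, unlike in Theorem~\ref{refl-thm-fctr-1}, the compatibility~\eqref{fst-eq-thm-sols-equiv} is now assumed outright rather than derived from a specific choice of $\wedge$ — which is exactly what makes this version more general.
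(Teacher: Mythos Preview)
Your proposal is correct and matches the paper's proof essentially line for line: the paper also applies Theorem~\ref{refl-eq-thm-sols-equiv} to reduce to~\eqref{thrd-eq-thm-sols-equiv}, invokes Lemma~\ref{refl-lemma-factorised} via the hypothesis $k(x)-x\in J$ to get $\tau_{k(y)}(x)-\tau_y(x)\in J$, and then uses $g(x+J)=g(x)$ to conclude. Your additional remarks about taking $Y=X$ and about the compatibility condition being assumed rather than derived are accurate bookkeeping.
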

\begin{proof}
By theorem \ref{refl-eq-thm-sols-equiv} it suffices to show
\begin{equation} \label{eq-to-show}
    f(x) \wedge g(\tau_{k(y)}(x)) = f(x) \wedge g(\tau_y(x)),\ x,y \in X,
\end{equation}
as all other assumptions of the theorem are trivially satisfied.

Since by assumption $k(x) - x \in J,\ x\in X$, lemma \ref{refl-lemma-factorised} yields
\[
\tau_{k(y)} (x) - \tau_y(x) \in J,\ x \in X
\]
which implies
\[
g(\tau_{k(y)} (x)) = g(\tau_y(x)),\ x,y \in X
\]
by the assumption $g(x+J)=g(x)$. Then equation \eqref{eq-to-show} follows.
\end{proof}

\begin{prop} \label{fctr-thm-socle}
Let $(N,+,\rngmlt)$ be a nilpotent ring with a subring $S$ and a two-sided ideal $I$ such that $S \cap I = 0,\ N = S + I$. 
Let $(N,+,\brmlt)$ be as above. Then $N=S \brmlt I$ and let $(N,+,\fctrmlt)$ be the brace as in~\ref{fctr-brc} with Yang-Baxter map $r$. 
Let $X \subseteq N$ be such that $(X,r)$ is a solution to the YBE.

Assume $J\subseteq I \cap X$ is a two-sided ideal of the ring $\nring$ and $k: X \to X$ is such that for each $x\in X$ it satisfies
$\inlineequation[fctr-thm-socle:as1]{
k(x+J)=k(x)
},$
$\inlineequation[fctr-thm-socle:as2]{
k(x) - x \in J
}$
and
$\inlineequation[fctr-thm-socle:as3]
{
k(x) \in \soc{B^\op} \cap X
}.$
Then $k$ is a reflection of $(X,r)$.
\end{prop}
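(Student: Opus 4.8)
The plan is to realise $k$ in the exact form to which Theorem~\ref{refl-thm-fctr-general} applies, so that the reflection property follows from that theorem with no direct manipulation of the reflection equation. The ambient setup of Proposition~\ref{fctr-thm-socle} is word-for-word the setup of Theorem~\ref{refl-thm-fctr-general} (same nilpotent ring, same $S$, $I$, $J$, same factorizable brace $\nfctr$ and its Yang-Baxter map $r$), so that theorem is available. Its hypotheses ask only that $k$ be written as $k(x)=f(x)\wedge g(x)$ for a \gequiv\ map $f$, a map $g$, and a pairing $\wedge$ satisfying the compatibility~\eqref{fst-eq-thm-sols-equiv}; the most economical choice is to let $\wedge$ be projection onto the second coordinate and push all the content into $g$.

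Concretely, I would set $f=\id$ (which is \gequiv, since it commutes with every $\sigma_x$), $g=k$, and define $\wedge\colon X\times X\to X$ by $x\wedge y=y$. Then $f(x)\wedge g(x)=x\wedge k(x)=k(x)$, so the candidate reflection produced by Theorem~\ref{refl-thm-fctr-general} is exactly our $k$. Three of the four hypotheses of that theorem are then immediate: $f=\id$ is \gequiv; the requirement $g(x+J)=g(x)$ is precisely assumption~\eqref{fctr-thm-socle:as1}; and $k(x)-x\in J$ is precisely assumption~\eqref{fctr-thm-socle:as2}.

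The only hypothesis requiring an argument is the compatibility $\sigma_x(y\wedge g(z))=\sigma_x(y)\wedge g(z)$. With $\wedge$ the second projection both sides ignore $y$, so this collapses to $\sigma_x(g(z))=g(z)$ for all $x,z\in X$, i.e.\ to the statement that every value $g(z)=k(z)$ is a fixpoint of every $\sigma_x$. This is exactly where assumption~\eqref{fctr-thm-socle:as3} enters: since $k(z)\in\soc{B^\op}\cap X$, Proposition~\ref{soccle-fixpoint} guarantees $\sigma_x(k(z))=k(z)$ for every $x\in X$. Hence all four hypotheses of Theorem~\ref{refl-thm-fctr-general} hold and $k$ is a reflection of $(X,r)$.

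I do not expect a genuine obstacle: the whole point is the observation that the projection $\wedge$ converts the compatibility~\eqref{fst-eq-thm-sols-equiv} into precisely the socle/fixpoint condition of Proposition~\ref{soccle-fixpoint}. If one instead wanted a self-contained argument bypassing Theorem~\ref{refl-thm-fctr-general}, the alternative is to note that $r$ is nondegenerate and involutive (Theorem~\ref{yb-brc-map}) and invoke the first-coordinate criterion from the remark after Example~\ref{refl-eq-ex-mod-n}: the fixpoint property collapses the outer $\sigma$ on each side of~\eqref{refl-eq}, reducing the two first coordinates to $k(\tau_{k(b)}(a))$ and $k(\tau_b(a))$, which agree because $\tau_{k(b)}(a)-\tau_b(a)\in J$ by Lemma~\ref{refl-lemma-factorised} together with~\eqref{fctr-thm-socle:as2}, while $k$ is constant on $J$-cosets by~\eqref{fctr-thm-socle:as1}. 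The first route is preferable, as it avoids both the coordinate bookkeeping and any worry about nondegeneracy of the restriction to $X$.
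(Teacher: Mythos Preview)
Your primary route (set $f=\id$, $g=k$, $\wedge$ the second projection, then apply Theorem~\ref{refl-thm-fctr-general}) is correct. The only nontrivial check --- that $\sigma_x(k(z))=k(z)$ --- is exactly the fixpoint statement supplied by Proposition~\ref{soccle-fixpoint} via assumption~\eqref{fctr-thm-socle:as3}, and the remaining hypotheses of Theorem~\ref{refl-thm-fctr-general} are literally~\eqref{fctr-thm-socle:as1} and~\eqref{fctr-thm-socle:as2}.

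This is, however, not the route the paper takes. The paper argues directly via the first-coordinate criterion (theorem~1.8 of~\cite{refl}): it writes down the two first coordinates $\sigma_{\sigma_x(y)}k\tau_y(x)$ and $\sigma_{\sigma_x k(y)}k\tau_{k(y)}(x)$, uses Lemma~\ref{refl-lemma-factorised} with~\eqref{fctr-thm-socle:as2} to get $\tau_{k(y)}(x)-\tau_y(x)\in J$, then~\eqref{fctr-thm-socle:as1} to conclude $k\tau_{k(y)}(x)=k\tau_y(x)$, and finally~\eqref{fctr-thm-socle:as3} with Proposition~\ref{soccle-fixpoint} to kill the outer $\sigma$'s. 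In other words, the paper's proof is precisely the ``alternative'' you sketch in your last paragraph. Your main route is a legitimate repackaging: Theorem~\ref{refl-thm-fctr-general} already encapsulates the invocation of Lemma~\ref{refl-lemma-factorised} and of Theorem~\ref{refl-eq-thm-sols-equiv}, so you avoid recomputing first coordinates at the cost of introducing a somewhat artificial $\wedge$. Both arguments consume the same three assumptions at the same logical points; the difference is purely organisational.
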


\begin{proof}
Let $x,y\in X$.
By theorem 1.8 of~\cite{refl} it suffices to show that the first coordinates of the reflection equation match, \ie
\begin{equation} \label{fctr-thm-scole:to-show}
    \sigma_{\sigma_x(y)} k \tau_y(x) = \sigma_{\sigma_x k(y)} k \tau_{k(y)} (x),\ x,y \in X.
\end{equation}

From assumption~\eqref{fctr-thm-socle:as2} and lemma~\ref{refl-lemma-factorised} it follows that
\[
\tau_{k(y)} (x) - \tau_y(x) \in J
\]
whence by~\eqref{fctr-thm-socle:as1}
\[
k \tau_{k(y)} = k \tau_y(x).
\]
Moreover, by assumption~\eqref{fctr-thm-socle:as3} both sides of the equality lie in $\soc{B^\op} \cap X$ and hence by proposition~\ref{soccle-fixpoint} they are fixpoints of $\sigma_z$ for each $z\in X$.
Thus~\eqref{fctr-thm-scole:to-show} is true and we are done.
\end{proof}

\section{The Parameter-dependent Reflection Equation} \label{section-parameter-dependent}

In the theory of integrable systems~\cite{sklyanin,fst-refl,klimyk}, the version of the YBE that is used is the \emph{parameter-dependent} version:
\begin{equation} \label{ybe-param-eq}
    (R(u) \otimes I)(I \otimes R(u+v))(R(v) \otimes I)
    =
    (I\otimes R(v)) (R(u+v) \otimes I)(I \otimes R(u))
\end{equation}
where $R : \complex \to (V\otimes V \to V\otimes V)$ for some vector space $V$ and \eqref{ybe-param-eq} must be true for all $u,v\in \complex$. 

\begin{comment}

It is well known that if $R: V \otimes V \to V \otimes V$ is an involutive solution to the linear YBE~\eqref{linear-ybe} (without parameter dependence), then $R\prm (u) := I + uR,\ u \in \complex$ is a solution to the parameter-dependent YBE. This follows from a straightforward calculation.

Set-theoretic solutions to the YBE yield solutions to the linear YBE~\eqref{linear-ybe} in a straightforward way. For a set $X$ let $V=\complex(X)$, the $\complex-$vector space with basis $X$. If $(X,r)$ is a solution to the set-theoretic YBE then we can linearly extend $r$ to $R:V\otimes V \to V\otimes V$ in the obvious way: if $r(x,y) = (\sigma_x(y), \tau_y(x)),\ x,y \in X$ then on the basis vectors $x,y \in X \subseteq V$ we define $R(x\otimes y) := \sigma_x(y) \otimes \tau_y(x)$. It is easy to see that $(V,R)$ is a solution to the (linear) YBE~\eqref{linear-ybe}.

\end{comment}

The same is true for the reflection equation:
\begin{dfn}[pp. 368, equation (89), \cite{klimyk}]
Let $(V,R)$ be a solution to the parameter-dependent YBE. We say that $K: \complex \to (V \to V)$ satisfies the \emph{parameter-dependent reflection equation} if for all $u,v \in \complex$
\begin{multline} \label{refl-param-dep}
    (I \otimes K(v)) (R(u+v)) (I\otimes K(u)) (R(u-v))\\
    =
    (R(u-v)) (I\otimes K(u)) (R(u+v))(I \otimes K(v))
\end{multline}
\end{dfn}

The following theorem from \cite{refl} shows how involutive solutions to the set-theoretic reflection equation extend to solutions of the parameter-dependent equation~\eqref{refl-param-dep}. We remind the reader that for a set $X$ and some map $f:X\to X$ we can extend $f$ to $F: \complex(X) \to \complex(X)$, where $\complex(X)$ is the $\complex$-vector space with basis $X$, by defining $F(x) = f(x),\ x\in X$ and then extending linearly to linear combinations of $x\in X$. It is easy to see that $\complex (X\times X) = \complex(X) \otimes \complex(X)$.

\begin{thm}[Theorem 5.1, \cite{refl}] \label{invol-param-dep-refl}
Let $(X,r)$ be an involutive non-degenerate solution to the set-theoretic YBE and $k$ a reflection of $(X,r)$. 

Let 
$V=\complex(X)$ and let $K$ be the linear extention of $k$ to
$V\to V$ and $R$ the linear extension of $r$ to $V\otimes V \to V\otimes V$. 

Define $K\prm : \complex \to (V \to V)$ and $R\prm : \complex \to (V\otimes V \to V\otimes V)$ by
\[
K\prm (u) (x) = uK(x),\ u\in \complex, x\in V
\]
and
\[
R\prm (u) (x) = I + uR(x),\ u\in \complex, x\in V.
\]
If we assume that $k$ is involutive, then $K\prm$ satisfies the parameter-dependent reflection equation for $(V,R\prm)$.
\end{thm}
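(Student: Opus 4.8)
The plan is to substitute the explicit forms of $K\prm$ and $R\prm$ into the parameter-dependent reflection equation \eqref{refl-param-dep}, expand both sides as polynomials in $u$ and $v$ whose coefficients are operators on $V \otimes V$, and then match coefficients. To lighten notation I would write $S := R$ and $T := I \otimes K$ for the two relevant operators on $V \otimes V$, so that $R\prm(w) = I + wS$ and $I \otimes K\prm(w) = w\,T$ for every $w \in \complex$. With this, the left-hand side of \eqref{refl-param-dep} becomes $vT\,(I+(u+v)S)\,uT\,(I+(u-v)S)$ and the right-hand side becomes $(I+(u-v)S)\,uT\,(I+(u+v)S)\,vT$.

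First I would expand the left-hand side. Pulling out the common scalar $uv$ and multiplying out the four factors gives
\[
uv\big[\,T^2 + (u-v)\,T^2 S + (u+v)\,T S T + (u^2-v^2)\,T S T S\,\big].
\]
A completely analogous expansion of the right-hand side yields
\[
uv\big[\,T^2 + (u+v)\,T S T + (u-v)\,S T^2 + (u^2-v^2)\,S T S T\,\big].
\]
Since these two operator-valued polynomials must agree for all $u,v \in \complex$, and two polynomials agreeing on an infinite field agree coefficientwise, it suffices to compare coefficients of the monomials $u^iv^j$. The $T^2$ terms and the $(u+v)\,TST$ terms coincide automatically, so the whole identity reduces to the two operator equations $T^2 S = S T^2$ (forced by the coefficients of $u$, equivalently of $v$) and $T S T S = S T S T$ (forced by the coefficients of $u^2$, equivalently of $v^2$).

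It then remains to verify these two reduced identities, and this is where the two hypotheses enter. The identity $TSTS = STST$ is precisely the linear extension of the set-theoretic reflection equation \eqref{refl-eq}: since $R$ and $I \otimes K$ are the linear extensions of $r$ and $\id \times k$, both of which send basis elements of $V \otimes V = \complex(X \times X)$ to basis elements, the linear extension of a composite equals the composite of the linear extensions, so \eqref{refl-eq} extends to $R(I\otimes K)R(I\otimes K) = (I\otimes K)R(I\otimes K)R$; this holds because $k$ is a reflection of $(X,r)$. The identity $T^2 S = S T^2$ is where involutivity of $k$ is used: $k$ involutive means $K^2 = I_V$, hence $T^2 = (I \otimes K)^2 = I \otimes K^2 = I_{V \otimes V}$, so $T^2 S = S = S T^2$ trivially.

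I expect the only real obstacle to be organisational rather than conceptual: one must carry out the four-factor expansions on both sides without error and then observe that the two apparent mismatches, $T^2 S$ versus $S T^2$ and $TSTS$ versus $STST$, are exactly the places where the two hypotheses are consumed. It is worth flagging that dropping the involutivity assumption would leave the genuinely nontrivial requirement $T^2 S = S T^2$, which need not hold in general; this is what explains why that hypothesis cannot be omitted.
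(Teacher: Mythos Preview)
The paper does not prove this theorem; it is quoted verbatim as Theorem~5.1 of \cite{refl} and used as a black box. So there is no ``paper's own proof'' to compare against. That said, your argument is correct and is the natural direct verification one would expect in \cite{refl}: expand both sides of \eqref{refl-param-dep} as operator-valued polynomials in $u,v$, observe that the only discrepancies are $T^2S$ versus $ST^2$ and $TSTS$ versus $STST$, and discharge these using $K^2=I$ (involutivity of $k$) and the linearised reflection equation (the hypothesis that $k$ is a reflection), respectively. One small imprecision: the relevant monomials are $u^2v,\ uv^2$ for the first identity and $u^3v,\ uv^3$ for the second, not ``$u$'' and ``$u^2$'' as you wrote, since everything already carries the global factor $uv$; this does not affect the argument.
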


%\todo{Say that this is similar to a standard theorem that if r involutive, then it's linear extension solves linear YBE. }

In the next theorem we show that two special solutions of theorem~\ref{k1-k2-equiv} are involutive and thus by theorem~\ref{invol-param-dep-refl} yield solutions to the parameter-dependent reflection equation.

\begin{thm}
Let $\abrace$ be a left brace, $r$ the Yang-Baxter map of $\abrace$ and $X\subseteq B$ such that $(X,r)$ is a solution to the YBE. 

Let $c\in B$ be central in $(B,\brmlt)$. 

The map
\[
k_1(x) = c\brmlt x -c,\ x\in X,
\]
assuming $k_1(X) \subseteq X$, is an involutive reflection of $(X,r)$ if $c^2 \in \soc{B}$. In particular, it is an involutive reflection if $c$ is involutive in $(B,\brmlt)$.

If we assume that $2\ c\brmlt x = 2c+2x,$ for all $x\in X$, and $c$ is involutive, then the map
\[
k_2(x) = c\brmlt x +c,\ x\in X
\]
is an involutive reflection of $(X,r)$, assuming $k_2(X) \subseteq X$.
\end{thm}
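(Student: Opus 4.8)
The plan is to observe that the reflection property itself requires no new work. Under the stated hypotheses Theorem~\ref{k1-k2-equiv} already shows that $k_1$ (for $c$ central) and $k_2$ (for $c$ central together with $2\,c\brmlt x = 2c+2x$) are \gequiv; and since $r$ is the Yang-Baxter map of a brace, $(X,r)$ is a nondegenerate involutive solution, so Theorem~\ref{equiv-refl} upgrades both equivariant maps to reflections. What genuinely remains in each case is to verify that $k_i^2 = \id$ on $X$, so the entire proof reduces to squaring the two maps.

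For $k_1$ I would compute the square directly. Applying Lemma~\ref{cedo-lemma} with the central element $c$,
\[
k_1 k_1(x) = c\brmlt(c\brmlt x - c) - c = c^2 \brmlt x - c^2,\ x\in X.
\]
Thus $k_1^2 = \id$ exactly when $c^2 \brmlt x = x + c^2$ for every $x \in X$. Since $(B,+)$ is abelian, this is precisely the defining relation $c^2\brmlt b = c^2 + b$ of the socle evaluated at $b=x$; hence $c^2 \in \soc{B}$ is sufficient. For the ``in particular'' clause, recall that in a brace the multiplicative and additive units coincide, so $c$ involutive means $c^2 = 0$, and $0 \in \soc{B}$ holds trivially because $0\brmlt b = b = 0 + b$; the special case therefore follows at once.

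For $k_2$ I would argue in the same spirit, now invoking distributivity~\eqref{dstr} in place of Lemma~\ref{cedo-lemma}:
\[
k_2 k_2(x) = c\brmlt(c\brmlt x + c) + c = (c^2\brmlt x + c^2 - c) + c = c^2\brmlt x + c^2.
\]
With $c$ involutive we have $c^2 = 0$, so $c^2\brmlt x = 0\brmlt x = x$ and $c^2 = 0$, giving $k_2^2(x) = x$. Note that the auxiliary hypothesis $2\,c\brmlt x = 2c+2x$ is used only to secure the reflection property through Theorem~\ref{k1-k2-equiv} and plays no role in this involutivity computation.

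I do not expect a serious obstacle here: each square collapses after a single application of Lemma~\ref{cedo-lemma} or \eqref{dstr}. The only point demanding care is the bookkeeping of identities --- recognising that the socle condition $c^2\in\soc{B}$ is \emph{exactly} the statement $k_1^2=\id$, and reading ``involutive'' with respect to the multiplicative unit $0$ so that $c^2=0$. Once these identifications are in place, both conclusions follow immediately.
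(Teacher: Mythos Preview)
Your proposal is correct and follows essentially the same route as the paper: reduce to involutivity via Theorem~\ref{k1-k2-equiv} and Theorem~\ref{equiv-refl}, then compute $k_1^2$ using Lemma~\ref{cedo-lemma} and $k_2^2$ using distributivity~\eqref{dstr}, invoking $c^2\in\soc{B}$ (resp.\ $c^2=0$) to collapse the result to $x$. The intermediate algebra and the handling of the ``in particular'' clause match the paper's argument line by line.
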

\begin{proof}
The maps $k_1,k_2$ are special cases of the maps in theorem~\ref{k1-k2-equiv}, where we showed they are \gequiv\ and thus reflections of $(X,r)$ by theorem~\ref{equiv-refl}. Thus it suffices to prove that $k_1^2 = k_2^2 = \id$.

Let $x\in X$. For $k_1$ we have
\begin{align*}
    k_1^2(x) &= k_1 (c\brmlt x - c)\\
             &= c\brmlt (c\brmlt x -c) - c\\
             &= c^2 \brmlt x - c^2 &&{\text{  by \ref{cedo-lemma}}} \\
             &= x &&{\text{  $c^2\in \soc{B}$}}
\end{align*}
thus $k_1 ^2 = \id$. In particular, if $c$ is involutive then $c^2 = 0 \in \soc{B}$.

For $k_2$ we have
\begin{align*}
    k_2^2(x) &= k_2(c\brmlt x + c)\\
             &= c\brmlt (c\brmlt x + c) + c\\
             &= c\brmlt c\brmlt x + c\brmlt c &&{\text{  by \eqref{dstr}}}&& \\
             &= 0 \brmlt x + 0 &&{\text{  $c$ is involutive in $(B,\brmlt)$}}&&\\
             &= x
\end{align*}
hence $k_2^2 = \id$ as desired.

\end{proof}

The next theorem shows that one reflection of the brace $\nbrace$ from theorem~\ref{refl-thm-fctr-1} is involutive and thus yields a solution to the parameter-dependent reflection equation.
\begin{thm}
Let $(N,+,\rngmlt)$ be a nilpotent ring with a subring $S$ and a two-sided ideal $I$ such that $S \cap I = 0,\ N = S + I$.
Let $(N,+,\brmlt)$ be as above. Then $N=S \brmlt I$ and let $(N,+,\fctrmlt)$ be the brace as in~\ref{fctr-brc} with Yang-Baxter map $r$. 
Let $X \subseteq N$ be such that $(X,r)$ is a solution to the YBE.

Assume $J\subseteq I \cap X$ is a two-sided ideal of the ring $\nring$. Let $g: X\to X$ be a map that for each $x\in X$ satisfies
$g(x) \ast z  = z \ast g(x),\ z\in I,$
$g(x + J) = g(x) $
and
$g(x) \in J$. %this is an "extra" assumption -- not in 4.5
Moreover assume that for every $x\in X$, $g(x)$ is involutive in the adjoint group $(N,\brmlt)$.
Then $k(x) = x+x\rngmlt g(x)$ is an involutive reflection of $(X,r)$.
\end{thm}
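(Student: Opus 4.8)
The plan is to split the two assertions—that $k$ is a reflection, and that $k$ is involutive. The reflection part is an application of the factorizable-ring machinery already in place, while $k^2 = \id$ is a short computation whose only nonroutine ingredient is the translation of ``$g(x)$ involutive in $(N,\brmlt)$'' into a ring identity.

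For the reflection, I would invoke theorem~\ref{refl-thm-fctr-1} with $f = \id$. The identity map is \gequiv, and with this choice the map $k_2(x) = f(x) + f(x)\rngmlt g(x)$ of that theorem is exactly $x + x\rngmlt g(x) = k(x)$. The standing hypotheses supply precisely what is needed: the commutation $g(x)\rngmlt z = z\rngmlt g(x)$ for $z\in I$ is~\eqref{ref-thm-fctr:assm1}, the relation $g(x+J) = g(x)$ is~\eqref{ref-thm-fctr:assm2}, and since $g(x)\in J$ with $J$ a two-sided ideal of $\nring$ we get $k(x) - x = x\rngmlt g(x)\in J$. Only the conclusion for $k_2$ is needed, and its proof uses $k_2(x) - x\in J$ alone, so no condition on $k_1$ enters; equivalently one can apply theorem~\ref{refl-thm-fctr-general} with $x\wedge y = x + x\rngmlt y$. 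Either way $k$ is a reflection of $(X,r)$.

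It remains to prove $k^2 = \id$. The first step is to observe that the correction term added by $k$ is invisible to $g$: since $k(x) - x = x\rngmlt g(x)\in J$, the $J$-invariance $g(x+J) = g(x)$ gives $g(k(x)) = g(x)$. Hence
\[
k^2(x) = k(x) + k(x)\rngmlt g(k(x)) = k(x) + k(x)\rngmlt g(x),
\]
and substituting $k(x) = x + x\rngmlt g(x)$ and expanding by distributivity and associativity of $\rngmlt$ gives
\[
k^2(x) = x + 2\,(x\rngmlt g(x)) + x\rngmlt\big(g(x)\rngmlt g(x)\big).
\]
The second step uses involutivity. Because the additive and multiplicative units of a brace coincide at $0$, the element $g(x)$ being involutive in $(N,\brmlt)$ means $g(x)\brmlt g(x) = 0$; expanding the adjoint product $g(x)\brmlt g(x) = 2\,g(x) + g(x)\rngmlt g(x)$ yields $g(x)\rngmlt g(x) = -2\,g(x)$. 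Substituting this into the last display turns $x\rngmlt\big(g(x)\rngmlt g(x)\big)$ into $-2\,(x\rngmlt g(x))$, which cancels the term $2\,(x\rngmlt g(x))$ and leaves $k^2(x) = x$.

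I expect the one point demanding care to be the bookkeeping around $k(x) - x\in J$: this single fact does double duty, both validating the hypothesis of theorem~\ref{refl-thm-fctr-1} and forcing $g(k(x)) = g(x)$, which is what lets the second application of $k$ reuse the same value $g(x)$ and makes the cancellation possible. The rest—verifying the hypotheses and expanding $k^2$—is mechanical once the involution condition is read correctly as $g(x)\rngmlt g(x) = -2\,g(x)$.
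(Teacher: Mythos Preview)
Your proposal is correct and follows essentially the same approach as the paper: invoke theorem~\ref{refl-thm-fctr-1} with $f=\id$ to obtain the reflection, then compute $k^2(x)$ using $g(k(x))=g(x)$ (from $k(x)-x\in J$) and the identity $g(x)\brmlt g(x)=0$. The only cosmetic difference is that the paper factors the final expression as $x + x\rngmlt\big(g(x)+g(x)+g(x)\rngmlt g(x)\big) = x + x\rngmlt(g(x)\brmlt g(x)) = x$, whereas you substitute $g(x)\rngmlt g(x)=-2\,g(x)$ directly; the content is identical.
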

\begin{proof}
The first three assumptions guarantee that $k$ is a reflection of $(X,r)$ by theorem~\ref{refl-thm-fctr-1} for $k_2=k$ by setting $f(x) = x$. Thus it suffices to show $k^2(x) = x$.
We calculate $k^2(x)$:
\begin{align*}
    k^2(x) &= k(x) + k(x) \rngmlt g( k(x) ) \\
           &= x + x\rngmlt g(x) + 
             (x+x\rngmlt g(x)) \rngmlt g(x + 
                   \underbrace{x\rngmlt g(x)}_{\in J}) 
                   &&\text{ since $g(x) \in J$} \\
           &= x + x\rngmlt g(x) + x\rngmlt g(x) + x\rngmlt g(x) \rngmlt g(x) 
           &&\text{$g(x+J) = g(x)$}\\
           &= x + x\rngmlt (g(x) + g(x) + g(x) \rngmlt g(x))\\
           &= x + x\rngmlt ( g(x) \brmlt g(x))\\
           &= x + x\rngmlt 0\\
           &= x,
\end{align*}
as desired.
\end{proof}

%%%%%%%%%%%%%%%%%%%%%%%%%%%%%%%%%%%%%%%%%%%%%%%%%%%%%%%%%%%%%%%%%%%%%%%%%%%%%%%%%%%%%%%%

%%%%%%%%%%%%%%%%%%%%%%%%%%%%%%%%%%%%%%%%%%%%%%%%%%%%%%%%%%%%%%%%%%%%%%%%%%%%%%%%%%%%%%%%%%%%%%%%%%%%%%%%%%%%%%%%%%%%%%%

% --------------------------------      BIBLIOGRAPHY        -------------------------------- %

\bibliographystyle{plain} % We choose the "plain" reference style
%%%%%%%% for unsorted bibliography; uncheck once bibliography is fixed
%\bibliographystyle{unsrt}

\bibliography{bibliography} % Entries are in the "bibliography.bib" file

%%%%%%%%%%%%%%%%%%%%%%%%%%%%%%%%%%%%%%%%%%%%%%%%%%%%%%%%%%%%%%%%%%%%%%%%%%%%%%%%%%%%%%%%%%%%%%%%%%%%%%%%%%%%%%%%%%%%%%%

\end{document}